\newtheorem{prp}{Proposition}
\newtheorem{lem}{Lemma}
\newtheorem{con}{Condition}
\newtheorem{exm}{Example}
\DeclareMathOperator*{\argmin}{arg\,min}
\DeclareMathOperator*{\argmax}{arg\,max}
\DeclareMathOperator*{\rank}{rank}
\begin{document}
	\begin{frontmatter}
		\title{Directing Power Towards Conic Parameter Subspaces}%\protect\thanksref{T1}}
		\runtitle{Directing Power Towards Conic Parameter Subspaces}
		\vspace{.5cm}
		PRELIMINARY VERSION \\
		\vspace{.5cm}
		12 November 2019
		%\thankstext{T1}{Footnote to the title with the `thankstext' command.}
		
		\begin{aug}
			\author{\fnms{Nick} \snm{Koning}\ead[label=e1]{n.w.koning@rug.nl}}%,
			%			\author{\fnms{Second} \snm{Author}\thanksref{t3}\ead[label=e2]{second@somewhere.com}}
			%			\and
			%			\author{\fnms{Third} \snm{Author}
			%				\ead[label=e3]{third@somewhere.com}
			%				\ead[label=u1,url]{www.foo.com}}
			
			%\thankstext{t1}{Some comment}
			%		\thankstext{t2}{First supporter of the project}
			%		\thankstext{t3}{Second supporter of the project}
			%		\runauthor{F. Author et al.}
			
			\address{N.W. Koning, Faculty of Economics and Business, University of Groningen, PO Box 800, 9700 AV Groningen, The Netherlands. e-mail: \printead{e1}.}
			
			%		\address{Address of the Third author,
			%			usually few lines long,
			%			usually few lines long\\
			%			\printead{e3,u1}}
		\end{aug}
		
	%	\title{Testing Against Sparse Alternatives}
	%	\author{Nick Koning}
	%	\address{14th of May, 2019, University of Groningen.}
	%	\maketitle
		\begin{abstract}
		%	\setstretch{1.3}
			
			\noindent For a high-dimensional parameter of interest, tests based on quadratic statistics are known to have low power against subsets of the parameter space (henceforth, parameter subspaces). In addition, they typically involve an inverse covariance matrix which is difficult to estimate in high-dimensional settings. I simultaneously address these two issues by proposing a novel test statistic that is large in a conic parameter subspace of interest. This test statistic generalizes the Wald statistic and nests many well-known test statistics. For a given parameter subspace, the statistic is free of tuning parameters and suitable for high-dimensional settings if the subspace is sufficiently small. It can be computed using regularized linear regression, where the type of regularization and the regularization parameters are completely determined by the parameter subspace of interest. I illustrate the statistic on subspaces that consist of sparse or nearly-sparse vectors, for which the computation corresponds to $\ell_0$- and $\ell_1$-regularized regression, respectively.
		\end{abstract}
		
		\begin{keyword}
			High-dimensional testing, quadratic statistic, Wald statistic, sparse alternatives, power enhancement, best subset selection, lasso, regularization%many moment inequalities, randomization test%, cross-sectional dependence, factor pricing model, market structure model
		\end{keyword}
	\end{frontmatter}

	\section{Introduction}
		% Multi-dimensional parameter testing
		% p > n
		% Noem H_0 en H_1
		% Set C en H_sub
		% Sparse sub alternatives
		% Korte uitleg van interpretatie
		% Next section, provide three empirical examples (of hier al)
		% Contributie dit paper uitleggen (toets statistiek)
		% Linken aan bestaande literatuur
		% power loss quad tests (low power tegen bijv sparse sub-alternatives)
		% non-invertible sample covariance
		
		% Multi dimensional
		% information
		% divert power
		% Relevance
		
	%	This paper concerns testing hypotheses on the location of a high-dimensional parameter $\vmu$, whose dimension $p$ may exceed the number of observations $n$. Traditionally, such tests are often based on a quadratic statistic like the Wald statistic. However, tests based on quadratic statistics are known to have low power against subsets of the parameter space in high dimensions. This has provoked a recent interest in constructing tests that direct power towards subsets of interest (see e.g. \citealp{fan2015power, kock2019power}).
	
		This paper concerns testing hypotheses on the location of a high-dimensional parameter $\vmu$, whose dimension $p$ may exceed the number of observations $n$. Traditionally, such tests are often based on a quadratic statistic like the Wald statistic. However, tests based on quadratic statistics are known to have low power against subsets of the parameter space (henceforth, parameter subspaces) in high dimensions. This has provoked a recent interest in constructing tests that direct power towards parameter subspaces of interest (see e.g. \citealp{fan2015power, kock2019power}).
		
		Such parameter subspaces arise if the econometrician has some prior information about the nature of violations of the null hypothesis. For example, suppose that we want to test the null hypothesis $H_0 : \vmu = \vzeros$, describing a collection of $p$ equalities. Then one may have prior information that if $H_0$ is false, only some number $k < p$ of these equalities are violated. That is, the parameter $\vmu$ is expected to be $k$-sparse under the alternative (i.e. containing $p-k$ zeros). So, we may want to use a test that directs power towards the parameter subspace consisting of $k$-sparse vectors. The following example describes how such situations can arise in practice.
		\begin{exm}
			Multi-factor pricing models describe the excess returns of assets as a linear combination of a limited number of factors. In particular, for a collection of $p$ assets at time $n$, let $\vy_n$ be the $p$-vector of excess returns, let $\vf_n$ be the $m$-vector containing the $m$ observable factors and let $\vepsi_n$ be a $p$-vector of errors. Let $\mB$ denote the $p \times m$ matrix of factor loadings, and $\vmu$ the $p$-vector of asset-specific returns that are not captured by the factors (also known as `alpha' in the finance literature). Then, a multi-factor pricing model can be written as
			\begin{align*}
				\vy_n = \vmu + \mB\vf_n + \vepsi_n.
			\end{align*}
			If the factor model is well specified, efficient pricing theory suggests that the excess returns should be fully captured by the factors so that $\vmu = \vzeros$. To assess this theory, one may want to test the null hypothesis $H_0 : \vmu = \vzeros$. As such models are well founded in theory, one may expect that if this hypothesis is false then it would only be violated by a small number of exceptional assets. So, if the alternative hypothesis holds, then the underlying vector of excess returns is likely to be sparse. Therefore, one may want to use a test that directs power towards a parameter subspace that consists of sparse vectors.
		\end{exm}
		
		Besides low power against parameter subspaces, another problem with quadratic statistics is that they typically involve and inverse covariance matrix,  which is usually not known in practice and must be estimated. This is problematic in high-dimensional settings, as standard estimators of such a covariance matrix are not invertible if $p > n$. Without imposing additional assumptions, this means that the Wald statistic does not exist. The solution provided in the literature is to estimate the covariance matrix by imposing restrictions on its structure, through regularization or other means (see e.g. \citealp{srivastava2008test, chen2011regularized}). However, if no information about the covariance matrix is available, then it is not clear which restrictions should be used. Imposing the wrong restrictions may lead to poor estimates, which in turn may lead to a loss of power.
		
		% My contribution
		I simultaneously address these two issues by proposing a novel test statistic that is large in a conic parameter subspace that is specified by the econometrician.\footnote{A cone $\calC$ is a collection of vectors that is closed under positive scalar multiplication. That is, if $\vlambda \in \calC$, then $\gamma\vlambda \in \calC$, for all $\gamma > 0$.} This test statistic is a generalization of the Wald statistic. It nests many well-known test statistics that can be recovered by choosing a particular parameter subspace of interest. 
		
		% Condition
		For the test statistic to exist, a restricted eigenvalue condition must be satisfied. The strength of this condition depends directly on the parameter subspace of interest. The condition is weaker than the condition that the sample covariance matrix is positive-definite. Unlike the standard Wald statistic, the test statistic can therefore be used even in high-dimensional settings, as long as the parameter subspace of interest is sufficiently `small'.
		
		% Computation
		In addition, I show that the computation of the test statistic can be formulated as a quadratic minimization problem, where the arguments over which is minimized are restricted to the parameter subspace of interest. If the standard sample covariance matrix and sample mean are used as estimators, this problem reduces to regularized linear regression with a constant dependent variable. I provide an additional result for the special case of diagonal estimators of the covariance matrix and parameter subspaces defined by sign or sparsity restrictions. In that case, the computation reduces to solving a minimum distance problem which can lead to a closed-form test statistic.

		% Demonstration: sparse sub-alternatives, including motivating example
		The test statistic is illustrated with the parameter subspace that consists of all $k$-sparse vectors. In case of a $k$-sparse parameter subspace, the existence criterion reduces to the requirement that the covariance estimator is at least of rank $k$. This condition is typically satisfied if $k < n$. The computation of the test statistic is a special case of best-subset selection, also known as $\ell_0$-regularization. Best-subset selection has seen a recent surge of interest after \citet{bertsimas2016best} showed that this problem is easier to solve than was commonly assumed, by exploiting gradient descent methods and commercially available mixed-integer optimization solvers.\footnote{See e.g. \citet{hastie2017extended,mazumder2017subset,hazimeh2018fast,koning2018sparse} for more recent work.} If one uses a diagonal covariance matrix estimator, then the test statistic has a closed form and is related to threshold-type statistics (see e.g., \citealp{fan1996test, zhong2013tests}). 
		
		% Lasso
		In addition, I recover the parameter subspace for which the computation of the test statistic coincides with Lasso, which is another popular technique used to obtain sparse solutions \citep{tibshirani1996regression}. This parameter subspace consists of a union of convex cones that are centered around sparse vectors, whose widths depend on the Lasso regularization parameter. Therefore, the parameter subspace corresponding to Lasso can be interpreted as the collection of vectors that are `close' to a sparse vector. Intuitively, these may be used if one suspects only a limited number of large violations of the null hypothesis $H_0 : \vmu = \vzeros$ might occur.
		
		% Critical value
		For the illustration of the test statistic with sparse and nearly-sparse cones, I construct a critical value using reflection-based randomization (see e.g. \citealp{lehmann2006testing}). While this construction relies on a symmetry assumption on the distribution of the error term, the advantage is that size control is guaranteed in small samples, even if $p \gg n$.
	
		% Monte Carlo
		The power properties of the tests proposed in this paper are studied using Monte Carlo simulation. The results suggest that the tests perform favorably compared to the Wald test if $p < n$ and retains power even if $p \gg n$. In addition, the test typically outperforms the power enhancement technique of \citet{fan2015power}.

		% Description PE
		The power enhancement technique is a recently proposed method that increases power in a parameter subspaces of interest. This technique works as follows: given some initial test, one chooses an enhancement test which has asymptotic size zero and is consistent against a parameter subspace, against which the initial test is not consistent. The combination of these two tests has asymptotic size equal to the initial test, and may be consistent against a strictly larger set of alternatives. The existence of power enhancement tests has recently been discussed by \citet{kock2019power}.
			
		% pro's and con's PE vs this paper
		The main advantage of the methodology proposed in this paper over the power enhancement technique is that it is constructive: when the econometrician specifies a parameter subspace of interest, this simultaneously defines a test statistic. For the power enhancement technique, both an initial and enhancement test must be chosen by the econometrician. A typical choice for the initial test, as used by \citet{fan2015power}, would be based on a Wald-type statistic, which suffers from the issues in high-dimensional settings that were described above. Finding an appropriate enhancement test for a given parameter subspace may also not be straightforward in practice.
		
		The advantage of the power enhancement technique is its attractive asymptotic properties, which promise a strictly larger subset of the parameter space against which the test is consistent while maintaining asymptotic size. It remains unclear how this translates to finite samples. \citet{fan2015power} conduct several numerical experiments that suggest small size distortions may occur in finite samples. Larger size distortions are found in the Monte Carlo simulations presented in Section \ref{sec:mc}. In order to compensate for the size distortion, one could reduce the size of the initial test. This would effectively result in a test that directs power towards the subspace of interest by taking power away from the complement of this subspace.

		\subsection{Notation}
			For the remainder of the paper, the following notation is used. For a set $\calA$, the set $\calA^0 = \calA \cup \{\vzeros\}$ denotes the union of the set and the origin. The set $\calA^{\emptyset} = \calA \setminus \{\vzeros\}$ excludes the origin. Let $\mI_n = (\ve_1, \ve_2, \dots, \ve_n)$ denote the $n \times n$ identity matrix, and $\viota_n$ an $n$-vector of ones. For a matrix $\mX$ with full column rank, the matrix $\mP_{\vx} = \mX(\mX'\mX)^{-1}\mX'$ denotes the projection matrix of $\mX$. For a vector $\vx$ with elements $x_j$, $j = 1, \dots, p$, the subscripted vector $\vx_{\calJ}$ has $j$th element equal to $x_j$ if $j \in \calJ$ and 0, otherwise. Finally, the following standard notation is used
			\begin{align*}
			\|\vx\|_q
			= 
			\begin{cases}
			\sum_{j} 1_{\{x_j \neq 0\}}, &q = 0, \\
			(\sum_{j} |x_j|^q)^{\frac{1}{q}}, &0 < q < \infty.
			\end{cases}
			\end{align*}
		
	\section{From Quadratic to Conic statistics}\label{sec:dis}
		% Setting
		Let $\vmu$ be the $p$-vector of interest from the parameter space $\mathbb{R}^p$, and let $\vm$ be an estimator for $\vmu$ with covariance $\mSigma > 0$. The matrix $\mS \geq 0$ denotes a generic estimator of $\mSigma$. If $\mS$ is positive definite, a quadratic or Wald-type statistic that is typically used for testing the hypothesis $H_0 : \vmu = \vzeros$ is of the form
		\begin{align*}
			T = \sqrt{\vm'\mS^{-1}\vm},
		\end{align*}
		which is a weighted Euclidean norm of $\vm$. Let us slightly extend its definition so that $T = 0$ if $\vm = \vzeros$ and $\mS$ is singular. Then, $T$ can be equivalently written as a maximization problem. Specifically, let $\calS = \{\vlambda \in \mathbb{R}^p\ |\ \vlambda'\mS\vlambda = 1\}$ denote the unit ellipsoid induced by $\mS$, then
		
		\begin{align*}
			T
				= \max_{\vtheta'\vtheta = 1} \vm'\mS^{-\tfrac{1}{2}}\vtheta 
				= \max_{\vlambda\mS\vlambda = 1} \vm'\mS^{-\tfrac{1}{2}}\mS^{\tfrac{1}{2}}\vlambda
				= \max_{\vlambda \in \calS} \vm'\vlambda
				= \max_{\vlambda \in \calS^0} \vm'\vlambda,
		\end{align*}
		where the first step follows from the Cauchy-Schwarz inequality.\footnote{By the Cauchy-Schwarz inequality with $\vtheta \neq \vzeros$, we have $\vx'\vtheta\leq \sqrt{\vx'\vx}\sqrt{\vtheta'\vtheta}$, so that $\vx'\vtheta/\sqrt{\vtheta'\vtheta} \leq \sqrt{\vx'\vx}$, which implies $$\sqrt{\vx'\vx} = \max_{\substack{\vtheta}} \frac{\vx'\vtheta}{\sqrt{\vtheta'\vtheta}} = \max_{\substack{\vtheta \\ \vtheta'\vtheta = 1}} \vx'\vtheta.$$} Notice that the right-hand side reformulation does not use the inverse of $\mS$, but the equality holds as the maximization is unbounded if $\mS$ is singular and $\vm \neq \vzeros$.

		% Cone
		The key idea in this paper is to extend the definition of $T$ by restricting the weights $\vlambda$ to a section of $\calS$ that represents a direction of interest. The resulting statistic may exist even if $\mS$ is singular. Specifically, let $\calC \subseteq \mathbb{R}^p$ be a (not necessarily convex) cone and assume that $\calC$ is closed in $\mathbb{R}^p$. Then, I propose the following `conic' statistic
		\begin{align*}
			T_{\calC}
				= \max_{\substack{\vlambda \in \calS^0 \cap \calC}} \vm'\vlambda, 		
		\end{align*}
		which is an asymmetric semi-norm of $\vm$. The dependence on $\vm$ and $\calS$ is suppressed for notational convenience. Table \ref{tab:1} contains an overview of several test statistics that are nested by $T_{\calC}$ and can be obtained for different choices of $\calC$. Notice that $T = T_{\mathbb{R}^p}$.
			
		\begin{table}
			\caption{Several test statistics that are nested by the $T_{\calC}$ statistic.}
			\label{tab:1}
			\begin{tabular}{ll}
				\hline
				\hline
				$\calC$ &$T_{\calC}$ statistic\\
				\hline
				$\mathbb{R}^p$ & $\sqrt{\vm'\mS^{-1}\vm}$ (Wald statistic)\\
				$\calC_+ = \{\vlambda\ |\ \vlambda \geq \vzeros\}$ & $T_+$ \citep{koning2019exact} \\	 
				$\calC_{(1)} = \{\vlambda\ | \lambda_j = 0, j \neq 1\}$ &$|m_1| / s_1$ (two-sided $t$-statistic)\\
				$\calC_{(1)} \cap \calC_+$ &  $(m_1 / s_1)^+$ (one-sided $t$-statistic)\\
				$\calC_1 = \{\vlambda\ |\ \|\vlambda\|_0 \leq 1\}$ & $\max_j |m_j| / s_j$ \citep{bugni2016inference}\\
				$\calC_1 \cap \calC_+$  & $\max_j (m_j / s_j)^+$ \citep{chernozhukov2018inference} \\
				$\{(-1, 1, \vzeros), (1, -1, \vzeros)\}$ & $|m_1 - m_2| / \sqrt{s_1^2 + s_2^2}$, if $\mS$ is diagonal \citep{welch1947generalization} \\
				\hline
				\hline
			\end{tabular}
			\vspace{.1cm} \\
			\textit{This table contains some examples of statistics that are nested by the $T_{\calC}$ statistic. The left column describes the cone $\calC$ and the right column contains the corresponding $T_{\calC}$ statistic. The notation $(\cdot)^+ = \max\{\cdot, 0\}$ is used for the positive-part operator.}
		\end{table}
	
		\subsection{Existence of $T_{\calC}$}
			While the quadratic statistic $T_{\mathbb{R}^p}$ requires $\mS$ to be positive definite in order to exist, $T_{\calC}$ typically requires a weaker condition. Specifically, the following restricted eigenvalue condition (see e.g. \citealp{van2009conditions}) guarantees that $T_{\calC}$ exists.\footnote{The condition follows from first observing that the case $\vlambda = \vzeros$ is irrelevant, and then rewriting 
			\begin{align*}
				T_{\calC^\emptyset} 
					= \max_{\substack{\vlambda \in \calC^\emptyset}} \frac{\vm'\vlambda}{\sqrt{\vlambda'\mS\vlambda}} 
					= \max_{\substack{\vlambda \in \calC \\ \vm'\vlambda = 1}} \frac{1}{\sqrt{\vlambda'\mS\vlambda}} 
					= \frac{1}{\min_{\substack{\vlambda \in \calC \\ \vm'\vlambda = 1}} \sqrt{\vlambda'\mS\vlambda}}.
			\end{align*}}
			\begin{con}\label{con:exist}
				$\min_{\substack{\vlambda \in \calC \\ \vm'\vlambda = 1}} \vlambda'\mS\vlambda > 0$.
			\end{con}
			Notice this condition reduces to $\vlambda'\mS\vlambda > 0$ for all $\vlambda \in \mathbb{R}^p$, if $\calC = \mathbb{R}^p$ (and $\vm \neq \vzeros$). This is equivalent to $\mS > 0$, which indeed coincides with the condition for the typical Wald-type statistic $T_{\mathbb{R}^p}$ to exist.                                                               
		
	 	\subsection{Computation}		
			% Computation
			The following result shows that $T_{\calC}$ can be computed using restricted quadratic minimization. This is convenient as some of such problems have been well studied in the optimization literature. A proof of the result can be found in Appendix A.
			
			\begin{prp}\label{prp:quad}
				Let $\vlambda'\mS\vlambda > 0$ for all $\vlambda \in \calC^\emptyset$. Let $\widehat{\vbeta} = \argmin_{\vbeta \in \calC} 1 - 2\vm'\vbeta + \vbeta'\mS\vbeta$ and $\widehat{\vlambda} = \argmax_{\substack{\vlambda \in \calS^0 \cap\calC}} \vm'\vlambda$ be unique optimizers. Then
				\begin{align*}
					\widehat{\vlambda} = \frac{\widehat{\vbeta}}{\sqrt{\widehat{\vbeta}'\mS\widehat{\vbeta}}},
				\end{align*}
				if $\widehat{\vbeta} \neq \vzeros$ and $\widehat{\vlambda} = \vzeros$, otherwise.
			\end{prp}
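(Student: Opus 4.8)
The plan is to exploit the conic structure of $\calC$ to rewrite the quadratic minimization in ``polar'' coordinates, and then to optimize over the radial and angular parts separately.

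First I would record the polar factorization. A nonempty closed cone contains the origin (for any $\vlambda_0 \in \calC$ we have $\gamma\vlambda_0 \to \vzeros$ as $\gamma \downarrow 0$), and by hypothesis $\vbeta'\mS\vbeta > 0$ for every $\vbeta \in \calC^{\emptyset}$. Hence each nonzero $\vbeta \in \calC$ factors uniquely as $\vbeta = c\vlambda$ with $c = \sqrt{\vbeta'\mS\vbeta} > 0$ and $\vlambda = \vbeta / c \in \calS \cap \calC$, and conversely $c\vlambda \in \calC^{\emptyset}$ for any such pair. Substituting, the objective becomes, for $\vbeta = c\vlambda$,
\begin{align*}
1 - 2\vm'\vbeta + \vbeta'\mS\vbeta = 1 - 2c\,(\vm'\vlambda) + c^2,
\end{align*}
while $\vbeta = \vzeros$ yields the value $1$.

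Next I would perform the inner minimization over $c > 0$ with $\vlambda \in \calS \cap \calC$ fixed. The quadratic $c \mapsto 1 - 2c(\vm'\vlambda) + c^2$ has its vertex at $c = \vm'\vlambda$, so if $\vm'\vlambda > 0$ its minimum over $c > 0$ equals $1 - (\vm'\vlambda)^2$, attained at $c = \vm'\vlambda$, whereas if $\vm'\vlambda \le 0$ its infimum over $c > 0$ equals $1$ and is not attained --- but that value is already realized by $\vbeta = \vzeros$. Combining the outer minimum over $\vlambda$ with the point $\vbeta = \vzeros$, and recalling that $T_{\calC} = \max_{\vlambda \in \calS^0 \cap \calC} \vm'\vlambda = \max\{0, \max_{\vlambda \in \calS \cap \calC} \vm'\vlambda\} \ge 0$, I obtain that the minimal value of the objective over $\calC$ equals $1 - T_{\calC}^2$.

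Finally I would read off the minimizer and split into the two cases of the statement. If $T_{\calC} > 0$, let $\widehat{\vlambda}$ be the (assumed unique) maximizer; then $\vm'\widehat{\vlambda} = T_{\calC} > 0$ forces $\widehat{\vlambda} \ne \vzeros$, so $\widehat{\vlambda} \in \calS \cap \calC$, and $\vbeta = T_{\calC}\widehat{\vlambda} \in \calC^{\emptyset}$ attains the value $1 - T_{\calC}^2$ by the computation above; since $\widehat{\vbeta}$ is the unique minimizer, $\widehat{\vbeta} = T_{\calC}\widehat{\vlambda} \ne \vzeros$, and then $\widehat{\vbeta}'\mS\widehat{\vbeta} = T_{\calC}^2\,\widehat{\vlambda}'\mS\widehat{\vlambda} = T_{\calC}^2$, so that $\widehat{\vbeta}/\sqrt{\widehat{\vbeta}'\mS\widehat{\vbeta}} = \widehat{\vlambda}$. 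If instead $T_{\calC} = 0$, then $\vm'\vlambda \le 0$ for all $\vlambda \in \calS \cap \calC$, so every $\vbeta \in \calC^{\emptyset}$ has objective value $\ge 1 + c^2 > 1$; hence the unique minimizer is $\widehat{\vbeta} = \vzeros$ (and $\widehat{\vlambda} = \vzeros$), which is the ``otherwise'' clause. The two points that need care are that the polar factorization is legitimate precisely because the assumption $\vbeta'\mS\vbeta > 0$ on $\calC^{\emptyset}$ lets us normalize by $\sqrt{\vbeta'\mS\vbeta}$ --- this is where the restricted-eigenvalue-type hypothesis enters --- and that the inner infimum fails to be attained when $\vm'\vlambda \le 0$, so the isolated point $\vbeta = \vzeros$ must be kept in play; the rest is routine, and since existence and uniqueness of $\widehat{\vbeta}$ and $\widehat{\vlambda}$ are part of the hypotheses, no separate compactness argument is required.
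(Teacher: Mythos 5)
Your proof is correct, and it takes a recognizably different route from the paper's. The paper isolates the homogeneity of the problem in a separate lemma (rescaling the constraint level $\vlambda'\mS\vlambda=\gamma^2$ pulls out a factor $\gamma$ from the argmax), then for $\widehat{\vbeta}\neq\vzeros$ chains argmax identities — fixing the level at $\widehat{\vbeta}'\mS\widehat{\vbeta}$, noting the quadratic term is constant on that level set, and identifying the constrained maximizer with $\widehat{\vbeta}$ — and handles $\widehat{\vbeta}=\vzeros$ by contradiction via a limit $\gamma\to 0^{+}$. You instead make the homogeneity explicit as a polar factorization $\vbeta=c\vlambda$ with $c=\sqrt{\vbeta'\mS\vbeta}$ and $\vlambda\in\calS\cap\calC$, complete the square in the radial variable, and derive the optimal-value identity $\min_{\vbeta\in\calC}\bigl(1-2\vm'\vbeta+\vbeta'\mS\vbeta\bigr)=1-T_{\calC}^{2}$, from which both cases of the conclusion are read off by uniqueness. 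What your version buys is a uniform treatment of the two cases (the sign of $\vm'\vlambda$ does all the work, with no separate contradiction argument), an explicit relation between the two optimal values that the paper never states, and correct attention to the fact that the radial infimum is not attained when $\vm'\vlambda\le 0$ so the isolated point $\vbeta=\vzeros$ must be carried along. What the paper's version buys is reusability: its Lemma 1 is invoked again in the proofs of Propositions 2 and 3, whereas your polar-coordinate computation is self-contained to this statement. You also correctly ground the two hypotheses — closedness of the cone (so $\vzeros\in\calC$) and positivity of $\vlambda'\mS\vlambda$ on $\calC^{\emptyset}$ (so the normalization is legitimate) — and correctly lean on the assumed existence and uniqueness of the optimizers rather than proving them. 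No gaps.
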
			
		
			In Section \ref{sec:sparse}, I discuss a case where the computation reduces to $\ell_0$- and $\ell_1$-regularized regression. Section \ref{sec:test:scone} covers the special case if $\mS$ is diagonal, where the computation simplifies for cones that are described by sign and sparsity restriction.

	 	\subsection{Geometric interpretation}	\label{sec:dis:geom}
			% Geometric interpretation
			Condition \ref{con:exist} can be shown visually using the following geometric interpretation of $T_{\calC}$. Suppose that the vector $\widehat{\vlambda}$ with $\widehat{\vlambda}'\mS\widehat{\vlambda} \neq 0$ is the maximizing vector. Then $T_{\calC} = \vm'\widehat{\vlambda}$ can be viewed as the Euclidean norm of the vector $\vm$, after projection onto $\widehat{\vlambda}$ and scaling by the relative stretching of $\calS$ compared to the sphere:
			\begin{align*}
			\vm'\widehat{\vlambda}
			= \left\Vert\sqrt{\frac{\widehat{\vlambda}'\widehat{\vlambda}}{\widehat{\vlambda}'\mS\widehat{\vlambda}}}\mP_{\widehat{\vlambda}}\vm\right\Vert_2,
			\end{align*}
			where $\mP_{\widehat{\vlambda}} = \widehat{\vlambda}\widehat{\vlambda}'/\widehat{\vlambda}'\widehat{\vlambda}$ is the projection matrix of $\widehat{\vlambda}$.  A visual demonstration for both a singular and positive definite matrix $\mS$ is given in Figure \ref{fig:measure}. 
			
			In the left panel of Figure \ref{fig:measure}, we see that for the singular $\mS$, the ellipsoid $\calS$ is infinitely stretched along the scalar multiples $\gamma\vlambda_0$ of the vector $\vlambda_0 = (-1, 1)'$, for which $\gamma^2\vlambda_0'\mS\vlambda_0 = 0$. The chosen cone $\calC = \{\vlambda\ |\ \vlambda \geq \vzeros\}$ does not contain the positive or negative scalar multiples of $\vlambda_0$. So, $\vlambda'\mS\vlambda > 0$ for all $\vlambda \in \calC^\emptyset$. Therefore, Condition \ref{con:exist} is satisfied which means $T_{\calC}$ exists. The value of $T_{\calC}$ is represented as the length of the vector with the red diamond at its tip.
			
			In the right panel of Figure \ref{fig:measure}, $\mS$ is positive definite so that $\calS$ is a proper ellipse and Condition \ref{con:exist} is satisfied regardless of $\calC$. For the setting that is portrayed, it happens that the unconstrained maximizing vector $\widehat{\vlambda}$ are already contained in $\calC$. Therefore, $T_{\calC} = T_{\mathbb{R}^p}$. If $\widehat{\vlambda}$ was not contained in $\calC$, then $T_{\calC} \leq T_{\mathbb{R}^p}$.                                                                                               
			\begin{figure}
				\includegraphics[width = 7cm]{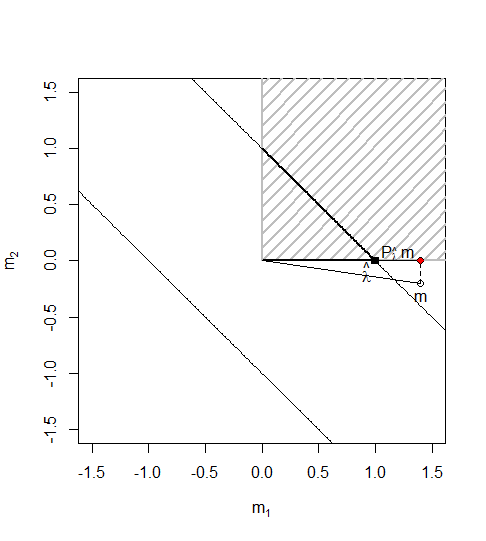}
				\includegraphics[width = 7cm]{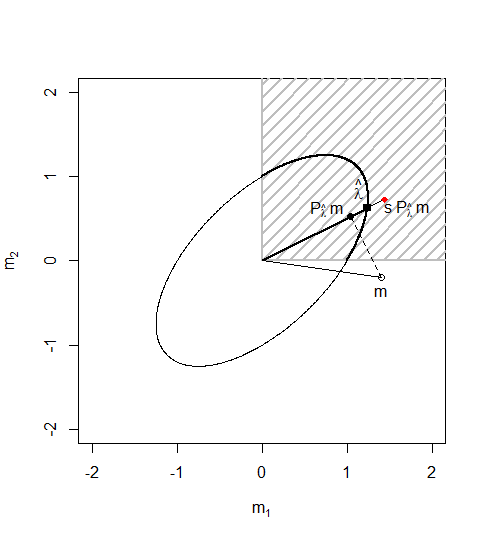}
				\caption{\textit{Visual demonstration of $T_{\calC}$ for a singular and positive-definite matrix $\mS$. In both plots, the diagonal elements of $\mS$ are 1. For the left plot, its off-diagonals are $1$, so that $\mS$ is singular and $\calS$ consists of two parallel lines. For the right plot, the off-diagonals are $-.6$, so that $\calS$ is a proper ellipse. The gray dashed area is the cone $\calC = \{\vlambda\ |\ \vlambda \geq \vzeros\}$. The thicker parts of $\calS$ represent the intersection $\calS \cap \calC$. In both plots, the thin line with an open circle at the end represents $\vm = (1.4, -.2)$, and the thicker line with the square at the end represents the maximizing vector $\widehat{\vlambda}$. The black solid dot is the projection $\mP_{\widehat{\vlambda}}\vm$, and the red solid diamond is the scaled projection $s\mP_{\widehat{\vlambda}}\vm$ of $\vm$ onto $\widehat{\vlambda}$, where $s = \sqrt{\frac{\widehat{\vlambda}'\widehat{\vlambda}}{\widehat{\vlambda}'\mS\widehat{\vlambda}}}$. The Euclidean norm of this scaled projection is $T_{\calC}$.}}
				\label{fig:measure}
			\end{figure}
			                                                                                                                                                            
	\section{Hypothesis testing}\label{sec:test}
		In this section, $T_{\calC}$ is used as a test statistic. In particular, I describe the hypotheses for which $T_{\calC}$ would be a suitable test statistic.
		
		\subsection{Null hypothesis}
			I start with describing the null hypothesis, for which the following notation is used. Let $T_\calC^* = \max_{\substack{\vlambda \in \calV \cap \calC}} \vmu'\vlambda$ denote the population equivalent of $T_\calC$, where $\calV^0 = \{\vy\ |\ \vy'\mSigma\vy = 1\}$. Let $\calC^\circ$ denote the polar cone of $\calC$, defined by $\calC^\circ = \{\vmu\ |\ \vmu'\vlambda \leq 0, \forall \vlambda \in \calC\}$. The polar cone of $\calC$ consists of the vectors that have a right angle or larger angle with any vector in $\calC$. It can therefore be interpreted as the cone that `points away' from $\calC$.
		
			Notice that the polar cone can equivalently be defined as $\calC^\circ = \{\vmu\ |\ \max_{\vlambda \in \calV^0\cap\calC} \vmu'\vlambda = 0\}$. So, the polar cone $\calC^\circ$ corresponds exactly to the parameter subspace where $T_{\calC}^* = 0$. Therefore, it seems appropriate to use $T_{\calC}$ as a test statistic for the null hypothesis
			\begin{align*}
				H_0 : \vmu \in \calC^{\circ}.
			\end{align*}
			
			Two examples are provided of different null hypotheses that can be constructed using different cones $\calC$.
			\begin{exm}
				The cone $\calC_+ = \{\vlambda\ |\ \vlambda \geq \vzeros\}$ has polar cone $\calC_+^\circ = \{\vmu \ |\ \vmu \leq \vzeros\}$, producing the multi-dimensional `one-sided' null hypothesis $H_0: \vmu \leq \vzeros$.
			\end{exm}
			
			\begin{exm}
				The cone $\calC_{(1)} = \{\vlambda\ |\ \lambda_j = 0,\ j \neq 1\}$  has polar cone $\calC_{(1)}^\circ = \{\vmu\ |\ \mu_1 = 0\}$ corresponding to the one-dimensional simple hypothesis $H_0 : \mu_1 = 0$. The intersection $\calC_{(1)+} = \calC_{(1)} \cap \calC_+$ has polar cone $\calC_{(1)+}^\circ$, which constructs the one-dimensional one-sided hypothesis $H_0: \mu_1 \leq 0$.
			\end{exm}	
		
		\subsection{Choosing a specific alternative hypothesis}			
			A key observation is that different cones $\calC$ can share the same polar cone, and therefore produce the same null  hypothesis. This is illustrated in the following example.
				
			\begin{exm}\label{ex:subalt}
				Let us consider $\calC_1 = \{\vlambda\ |\  \|\vlambda\|_0 \leq 1\}$, containing the cones that coincide with the axes, and the cone $\mathbb{R}^p$ that contains the entire parameter space. Notice that $T_{\calC_1}^* = \max_{\vlambda \in \calV^0 \cap \calC_1} \vmu'\vlambda = \max_j |\mu_j|/\sigma_j$ and $T_{\mathbb{R}^p}^* = \max_{\vlambda \in \calV^0} \vmu'\vlambda = \sqrt{\vmu'\mSigma^{-1}\vmu}$, are zero if and only if $\vmu = \vzeros$. Therefore, the polar cone corresponding to both $\calC_1$ and $\mathbb{R}^p$ is $\calC^\circ = \{\vzeros\}$. So, both $\calC_1$ and $\mathbb{R}^p$ produce the null hypothesis $H_0 : \vmu = \vzeros$. However, $T_{\calC_1}^*$ and $T_{\mathbb{R}^p}^*$ are clearly different for almost all $\vmu$ in $\mathbb{R}^p$: they only coincide if $\vmu \in \calC_1$.
			\end{exm}
		
			As different cones can correspond to the same null hypothesis, we have the freedom to change the cone $\calC$ without affecting the null hypothesis. This allows us to pick $\calC$ in such a way that power is directed towards a specific alternative of interest. To describe the alternative corresponding to a cone $\calC$, let us again consider $T_{\calC}^*$, the population equivalent of $T_{\calC}$. In particular, suppose that the maximizing vector is $\widehat{\vlambda} \neq \vzeros$, then
			
			\begin{align*}
				T_{\calC}^* 
				= \max_{\substack{\vlambda \in \calC \\ \vlambda'\mSigma\vlambda = 1}} \vmu'\vlambda
				= \max_{\substack{\mSigma^{-\frac{1}{2}}\vlambda \in \calC \\ \vlambda'\vlambda = 1}} \vmu'\mSigma^{-\frac{1}{2}}\vlambda
				= \max_{\substack{\vlambda \in \mSigma^{\frac{1}{2}}\calC \\ \vlambda'\vlambda = 1}} \vmu'\mSigma^{-\frac{1}{2}}\vlambda,
			\end{align*}
			where I use the notation $\mSigma^{\frac{1}{2}}\calC = \{\vx \in \mathbb{R}^p\ |\ \vx = \mSigma^{\frac{1}{2}}\vlambda,\ \vlambda \in \calC\}$. Notice that $T_{\calC}^*$ equals $T_{\mathbb{R}^p}^*$ if $\mSigma^{-\frac{1}{2}}\vmu \in \mSigma^{\frac{1}{2}}\calC^\emptyset$ by the Cauchy-Schwarz inequality, or equivalently if $\vmu \in \mSigma\calC^\emptyset$. If $\vmu \not\in \mSigma\calC^\emptyset$, then $T_{\calC}^* \leq T_{\mathbb{R}^p}^*$. So, $T_{\calC}^*$ is `large' if $\vmu \in \mSigma\calC^\emptyset$ and `small' if $\vmu \not\in \mSigma\calC^\emptyset$. Hence, $T_{\calC}$ seems to be a suitable test statistic for the alternative hypothesis
			\begin{align*}
				H_1^* : \vmu \in \mSigma\calC^\emptyset.
			\end{align*}
			
			Unlike the null hypothesis, this alternative hypothesis is not fully determined by $\calC$, but also depends on the the typically unknown parameter $\mSigma$. As a consequence, the exact parameter subspace towards which power is directed for a test based on $T_{\calC}$ may be unknown in practice. As this is inconvenient, I propose to instead use $T_{\calC}$ to test $H_0 : \vmu \in \calC^\circ$ against
			\begin{align*}
				H_1 : \vmu \in \calC^\emptyset.
			\end{align*}
			
			The consequence of this approach is a possible distortion of the alternative hypothesis. While this will clearly lead to a loss of power, the resulting test may still be substantially more powerful than a test based on a quadratic statistic. This is demonstrated numerically in the Monte Carlo experiments presented in Section \ref{sec:mc}. In addition, there exists an important special case where there is no distortion. This special case is treated in the following section.

		\subsection{Diagonal covariance matrices and scones}\label{sec:test:scone}
			There exists a type of cone $\calC$ and covariance matrix $\mSigma$ for which $\calC = \mSigma\calC$, so that $H_1$ and $H_1^*$ coincide. In particular, suppose that the elements of $\vm$ are uncorrelated, so that $\mSigma$ is diagonal. Let $\calD$ be a set of vectors that is closed under multiplication by a positive definite diagonal matrix. That is, if $\vlambda \in \calD$, then $\mD\vlambda \in \calD$, where $\mD > 0$. Notice here that $\calD$ is a cone.\footnote{This follows from the observation that multiplication by a scalar $\gamma > 0$ is equivalent to pre-multiplication with by diagonal matrix $\gamma\mI$, where $\mI$ is the identity matrix. So, $\calD$ is also closed under positive scalar multiplication, which makes it a cone.} So, we have indeed found a type of cone and covariance matrix for which $\calD = \mSigma\calD$. 
			
		%	Suppose that the elements of $\vm$ are uncorrelated, so that $\mSigma$ is diagonal. Under this assumption there exist cones for which $\mSigma\calC$ coincides with $\calC$. In particular, let $\calD$ be a set of vectors that is closed under multiplication by a positive definite diagonal matrix. That is, if $\vlambda \in \calD$, then $\mD\vlambda \in \calD$, where $\mD > 0$.  Notice that $\calD$ is a cone, as multiplication by a scalar $\gamma$ is equivalent to pre-multiplication with the diagonal matrix $\gamma\mI$, where $\mI$ is the identity matrix. 
			
			I have been unable to find a name of these types of cones and will henceforth refer to them as \textit{scones} (sign cone). A scone can be viewed as a union of sub-orthants (i.e. a union of possibly lower dimensional orthants embedded in a higher dimensional space). As pre-multiplication by a diagonal matrix $\mD > 0$ is a sign-preserving operation, the sconic parameter subspaces relate directly to hypotheses concerning sign or sparsity restrictions. To provide some more intuitions for scones, the following examples are given.
			
			\begin{exm}
				The scones in $\mathbb{R}^2$ are the origin, the four half-axes, the four quadrants, and any union of any these objects.
			\end{exm}
		
			\begin{exm}\label{exm:scone:sparse}
				The set of $k$-sparse vectors, defined by $\calD_k = \{\vmu \ |\ \|\vmu\|_0 \leq k\}$ is a scone. To see this, suppose $\vmu = (\mu_1, \dots, \mu_k, 0, \dots, 0)'$, without loss of generality. So $\vmu \in \calD_k$. Let $\mD > 0$ have diagonal elements $d_1, \dots, d_p$. Then $\|\mD\vmu\|_0 = \|(d_1\mu_1, \dots, d_k\mu_k, 0, \dots, 0)'\|_0 = \|\vmu\|_0$. So $\mD\vmu \in \calD_k$.
			\end{exm}
		
			If $\mSigma$ is assumed to be diagonal, it would be sensible to use a diagonal estimator $\mS$. In this case, the computation of $T_{\calC}$ can be simplified substantially. In particular, Proposition \ref{prp:diag} shows that computing $T_{\calC}$ requires solving a minimum distance problem. Such problems are typically easier to solve than linear regression. For example, the following section considers testing against sparse subspaces, for which this problem has a closed-form solution.
			
			\begin{prp}\label{prp:diag}
				Let $\mS > 0$ be a diagonal matrix and let $\calD$ be a scone. Suppose that $\widehat{\vlambda} = \argmax_{\substack{\vlambda \in \calS^0\cap\calD}} \vm'\vlambda$ and $\widehat{\vbeta} = \argmin_{\substack{\vbeta \in \calD}} \|\mS^{-\tfrac{1}{2}}\vm- \vbeta\|_2^2$ are unique optimizers. Then
				\begin{align*}
				\widehat{\vlambda} 
				= \mS^{-\tfrac{1}{2}}\widehat{\vbeta}/\sqrt{\widehat{\vbeta}'\widehat{\vbeta}},
				\end{align*}
				if $\widehat{\vbeta} \neq \vzeros$ and $\widehat{\vlambda} = \vzeros$, otherwise.
			\end{prp}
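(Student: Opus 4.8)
The plan is to reduce the statement to Proposition~\ref{prp:quad} by the linear change of variables $\vbeta \mapsto \mS^{\tfrac{1}{2}}\vbeta$, which is available precisely because $\mS$ is diagonal and $\calD$ is a scone. First I would record the relevant structure: since $\mS > 0$ is diagonal, $\mS^{\tfrac{1}{2}}$ and $\mS^{-\tfrac{1}{2}}$ are well defined, diagonal and positive definite; and since a scone is closed under multiplication by any positive definite diagonal matrix, while the inverse of such a matrix is again positive definite and diagonal, the map $\vbeta \mapsto \mS^{\tfrac{1}{2}}\vbeta$ is a bijection of $\calD$ onto itself, with inverse $\vgamma \mapsto \mS^{-\tfrac{1}{2}}\vgamma$. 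Note also that $\vlambda'\mS\vlambda > 0$ for every $\vlambda \in \calD^\emptyset$, so the hypothesis of Proposition~\ref{prp:quad} holds with $\calC = \calD$.

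Next I would relate the two minimization problems. Substituting $\vbeta = \mS^{-\tfrac{1}{2}}\vgamma$ and expanding the square gives
\begin{align*}
\|\mS^{-\tfrac{1}{2}}\vm - \vgamma\|_2^2 = \vm'\mS^{-1}\vm - 2(\mS^{-\tfrac{1}{2}}\vm)'\vgamma + \vgamma'\vgamma = \big(1 - 2\vm'\vbeta + \vbeta'\mS\vbeta\big) + \big(\vm'\mS^{-1}\vm - 1\big),
\end{align*}
so the objective minimized in Proposition~\ref{prp:diag} differs from the objective of Proposition~\ref{prp:quad}, after this substitution, only by a constant independent of $\vgamma$. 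Because $\vbeta \mapsto \mS^{\tfrac{1}{2}}\vbeta$ maps $\calD$ bijectively onto $\calD$, the minimizer of $1 - 2\vm'\vbeta + \vbeta'\mS\vbeta$ over $\vbeta \in \calD$ is therefore exactly $\mS^{-\tfrac{1}{2}}\widehat{\vbeta}$, with $\widehat{\vbeta}$ as in the statement; its uniqueness is inherited from that of $\widehat{\vbeta}$ through the bijection, and $\mS^{-\tfrac{1}{2}}\widehat{\vbeta} = \vzeros$ exactly when $\widehat{\vbeta} = \vzeros$.

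Finally I would apply Proposition~\ref{prp:quad} with $\calC = \calD$ and with $\mS^{-\tfrac{1}{2}}\widehat{\vbeta}$ in the role of its minimizer: this yields $\widehat{\vlambda} = \vzeros$ when $\widehat{\vbeta} = \vzeros$, and otherwise
\begin{align*}
\widehat{\vlambda} = \frac{\mS^{-\tfrac{1}{2}}\widehat{\vbeta}}{\sqrt{(\mS^{-\tfrac{1}{2}}\widehat{\vbeta})'\mS\,\mS^{-\tfrac{1}{2}}\widehat{\vbeta}}} = \frac{\mS^{-\tfrac{1}{2}}\widehat{\vbeta}}{\sqrt{\widehat{\vbeta}'\widehat{\vbeta}}},
\end{align*}
which is the asserted identity, using $\mS^{-\tfrac{1}{2}}\mS\mS^{-\tfrac{1}{2}} = \mI_p$ in the denominator. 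I do not expect a genuine obstacle here: the one real point is that a scone is invariant under the diagonal rescaling induced by $\mS^{\pm\tfrac{1}{2}}$, and everything else is bookkeeping — checking that the change of variables carries $\calD$ onto $\calD$, shifts the objective only by an additive constant, and transports both the uniqueness hypothesis and the dichotomy $\widehat{\vbeta} \neq \vzeros$ versus $\widehat{\vbeta} = \vzeros$.
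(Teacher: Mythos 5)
Your proof is correct, but it is organized differently from the paper's. The paper also starts from the substitution $\vbeta = \mS^{\frac{1}{2}}\vlambda$ (relying, as you do, on the fact that a scone is mapped onto itself by $\mS^{\pm\frac{1}{2}}$), but it then stays on the \emph{maximization} side: it rewrites $\widehat{\vlambda}$ as $\mS^{-\frac{1}{2}}$ times a maximizer of $\widehat{\vm}'\vbeta$ over the unit sphere intersected with $\calD$, rescales with Lemma~\ref{lem:1}, and finally identifies the sphere-constrained maximization of the inner product with the minimum-distance problem $\min_{\vbeta\in\calD}\|\vbeta-\widehat{\vm}\|_2^2$ --- in effect re-deriving a special case of Proposition~\ref{prp:quad} in the transformed coordinates. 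You instead work on the \emph{minimization} side: you show that after the change of variables the minimum-distance objective equals the quadratic objective of Proposition~\ref{prp:quad} up to the additive constant $\vm'\mS^{-1}\vm - 1$, so that the two argmins correspond under the bijection $\vbeta\mapsto\mS^{\frac{1}{2}}\vbeta$ of $\calD$ onto itself, and then you invoke Proposition~\ref{prp:quad} as a black box, with $\mS^{-\frac{1}{2}}\mS\,\mS^{-\frac{1}{2}}=\mI_p$ collapsing the normalization to $\sqrt{\widehat{\vbeta}'\widehat{\vbeta}}$. Your route buys a cleaner treatment of the step the paper glosses over (why the sphere-constrained maximizer of $\widehat{\vm}'\vbeta$ coincides with the unconstrained-norm distance minimizer), and it handles the $\widehat{\vbeta}=\vzeros$ case for free since that case is already part of Proposition~\ref{prp:quad}; the paper's route makes the geometric content (projection onto the scone after whitening) slightly more visible. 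Both hinge on the same essential observation, the invariance of $\calD$ under positive definite diagonal rescaling, which you state and justify explicitly.
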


	\section{Sparse parameter subspaces}\label{sec:sparse}
		% Sparse sub-alternatives
		In this section, I illustrate the methodology described in the previous sections by testing against \textit{sparse} parameter subspaces. For this illustration, I consider the following data generating process
		\begin{align}\label{eq:dgp}
			\mX = \viota_n\vmu' + \mE,
		\end{align}
		where $\mE$ is a random $n \times p$ matrix. 
		
		A sparse parameter subspace can be described as follows. For a given level of sparsity $k$, the $k$-sparse parameter subspace $\calD_{k} = \{\vmu \in \mathbb{R}^p \ |\  \|\vmu\|_0 \leq k\}$ consists of all $k$-sparse vectors of length $p$. Recall here that $\|\vx\|_0 = \sum_{j = 1}^{p} 1_{\{x_j \neq 0\}}$ is the $\ell_0$-norm of a vector $\vx$, which counts the number of non-zero elements in the vector. As detailed in Example \ref{exm:scone:sparse}, $\calD_{k}$ is a scone, because $\|\vx\|_0 = \|\mD\vx\|_0$ for all diagonal matrices $\mD > 0$. 
		
		The hypotheses for testing against a $k$-sparse parameter subspace can then be formulated as
		\begin{align*}
			H_0 &: \vmu = \vzeros, \\
			H_{1}^{k} &: \vmu \in \calD_k^\emptyset.
		\end{align*}
		The interpretation of the alternative $H_1^k$ is that (at most) $k$ elements of the equalities specified by the null hypothesis are violated.
		
		Following the methodology proposed in Section \ref{sec:test}, the statistic that we should use for the $k$-sparse alternative is $T_{\calD_k}$, which will be abbreviated as $T_{k}$ for notational convenience. In light of the data generating process \eqref{eq:dgp}, let us specify the statistic by choosing $\vm = \widehat{\vmu} := \tfrac{1}{n}\mX'\viota_n$ to be the sample mean and $\mS = \widehat{\mSigma} := \tfrac{1}{n} \mX'(\mI_n - \mP_{\viota_n})\mX$ to be the sample covariance matrix. 
		
		Following Condition \ref{con:exist}, a sufficient existence condition for $T_k$ is then given by
		\begin{con}\label{con:rank}
			$\rank(\widehat{\mSigma}) \geq k$.
		\end{con} 
		This condition is typically satisfied if the number of observations $n$ exceeds the hypothesized sparsity level $k$. So, if the hypothesized sparsity level is sufficiently small, is a much weaker condition than the requirement that $\widehat{\mSigma} > 0$ which requires that the number of observations exceeds the total number of parameters $p$.
			
		Although the maximizing vector $\widehat{\vlambda}$ in the computation of $T_k$ can be viewed as a by-product, it may also be of interest in its own right. In some applications it may be interesting to not just know \textit{whether} $H_0 : \vmu = \vzeros$ is violated, but also \textit{which} of the $p$ equalities specified by the null hypothesis are violated. A collection of $k$ potentially violated equalities is provided by the $k$ non-zero elements of $\widehat{\vlambda}$. The computation of $\widehat{\vlambda}$ is discussed in the following section.
		
		\subsection{Computation: full covariance matrix}\label{sec:sparse:full}		
			For the case that $\vm = \widehat{\vmu}$ and $\mS = \widehat{\mSigma}$, Proposition \ref{prp:regression} shows that the computation of $T_{\calC}$ reduces to regularized linear regression where the regularization is determined by $\calC$. The result is proven in Appendix A. 
			
			\begin{prp}\label{prp:regression}
				Let $\vlambda'\widehat{\mSigma}\vlambda > 0$ for all $\vlambda \in \calC^\emptyset$. Let $\widehat{\vbeta} = \argmin_{\substack{\vbeta \in \calC}} \frac{1}{n}\|\viota_n - \mX\vbeta\|_2^2$ and $\widehat{\vlambda} = \argmax_{\substack{\vlambda \in \calS^0 \cap\calC}} \vmu'\vlambda$ be unique optimizers. Then
				\begin{align*}
				\widehat{\vlambda}
				= \widehat{\vbeta}/\sqrt{\widehat{\vbeta}'\widehat{\mSigma}\widehat{\vbeta}},
				\end{align*}
				if $\widehat{\vbeta} \neq \vzeros$ and $\widehat{\vlambda} = \vzeros$, otherwise.
			\end{prp}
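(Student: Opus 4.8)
The plan is to reduce the claim to Proposition~\ref{prp:quad}. The only real work is to rewrite the regression objective $\tfrac{1}{n}\|\viota_n - \mX\vbeta\|_2^2$ in terms of the sample mean $\widehat{\vmu}$ and the sample covariance $\widehat{\mSigma}$, and then to observe that although the resulting objective is not literally the quadratic objective $1 - 2\widehat{\vmu}'\vbeta + \vbeta'\widehat{\mSigma}\vbeta$ appearing in Proposition~\ref{prp:quad}, its minimizer over $\calC$ is a positive scalar multiple of the minimizer there, so the two induce the same normalized vector. First I would record, from the data generating process~\eqref{eq:dgp} and $\mP_{\viota_n} = \tfrac{1}{n}\viota_n\viota_n'$, the identities $\tfrac{1}{n}\mX'\viota_n = \widehat{\vmu}$, $\tfrac{1}{n}\mX'\mP_{\viota_n}\mX = \widehat{\vmu}\widehat{\vmu}'$, and hence $\tfrac{1}{n}\mX'\mX = \widehat{\mSigma} + \widehat{\vmu}\widehat{\vmu}'$. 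Expanding the squared norm then gives
\[
\frac{1}{n}\|\viota_n - \mX\vbeta\|_2^2 = 1 - 2\widehat{\vmu}'\vbeta + \vbeta'\widehat{\mSigma}\vbeta + (\widehat{\vmu}'\vbeta)^2 = \bigl(1 - \widehat{\vmu}'\vbeta\bigr)^2 + \vbeta'\widehat{\mSigma}\vbeta .
\]
Writing $\vm = \widehat{\vmu}$, $\mS = \widehat{\mSigma}$, and noting that the hypothesis $\vlambda'\widehat{\mSigma}\vlambda > 0$ on $\calC^\emptyset$ is exactly the hypothesis of Proposition~\ref{prp:quad}, it remains to relate $\widehat{\vbeta} = \argmin_{\vbeta\in\calC}\bigl[(1-\vm'\vbeta)^2 + \vbeta'\mS\vbeta\bigr]$ to $\widehat{\vbeta}_{\mathrm{q}} = \argmin_{\vbeta\in\calC}\bigl[1 - 2\vm'\vbeta + \vbeta'\mS\vbeta\bigr]$.

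Because $\calC$ is a cone, I would perform both minimizations ray by ray: fix a direction $\vb\in\calC$, minimize over $t\geq 0$ along $\{t\vb\}$, and then minimize over directions. With $u = \vm'\vb$ and $v = \vb'\mS\vb$ (so $v>0$ whenever $\vb\neq\vzeros$, by hypothesis), an elementary one-variable computation shows that if $u\leq 0$ then both $t\mapsto 1-2tu+t^2v$ and $t\mapsto (1-tu)^2+t^2v$ are minimized over $t\geq 0$ at $t=0$, with value $1$; and if $u>0$ their minima equal $1-R(\vb)$ and $1/(1+R(\vb))$ respectively, where $R(\vb) = u^2/v = (\vm'\vb)^2/(\vb'\mS\vb)$ is invariant under positive rescaling of $\vb$. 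Both of these are strictly decreasing functions of $R(\vb)$, so minimizing either objective over $\calC$ reduces to the single problem of maximizing $R$ over $\{\vb\in\calC : \vm'\vb>0\}$; in both cases the minimizer is $\vzeros$ precisely when this set is empty (equivalently, $\vm\in\calC^\circ$), and otherwise it is a strictly positive multiple of an $R$-maximizing direction.

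Consequently $\widehat{\vbeta}$ and $\widehat{\vbeta}_{\mathrm{q}}$ are either both $\vzeros$ or both nonzero and positively proportional, lying on the common $R$-maximizing ray; uniqueness of $\widehat{\vbeta}$, which is assumed, forces this ray to be unique up to positive scaling and hence $\widehat{\vbeta}_{\mathrm{q}}$ (and, after normalization, $\widehat{\vlambda}$) to be well defined, so the hypotheses of Proposition~\ref{prp:quad} are met. Since dividing a vector by $\sqrt{\vbeta'\mS\vbeta}$ is insensitive to positive rescaling, $\widehat{\vbeta}/\sqrt{\widehat{\vbeta}'\widehat{\mSigma}\widehat{\vbeta}} = \widehat{\vbeta}_{\mathrm{q}}/\sqrt{\widehat{\vbeta}_{\mathrm{q}}'\mS\widehat{\vbeta}_{\mathrm{q}}}$, and the right-hand side is $\widehat{\vlambda}$ by Proposition~\ref{prp:quad}; when $\widehat{\vbeta}=\vzeros$ the same proposition gives $\widehat{\vlambda}=\vzeros$. (One could also bypass Proposition~\ref{prp:quad} and check directly that normalizing an $R$-maximizing direction to $\{\vlambda'\widehat{\mSigma}\vlambda = 1\}$ yields the maximizer in the definition of $T_{\calC}$.) The one substantive point — and hence the main obstacle — is the comparison in the second paragraph: the regression objective carries the extra term $(\vm'\vbeta)^2$ relative to Proposition~\ref{prp:quad}, so it is genuinely a different optimization problem, and one has to see that this term only repositions the minimizer along each ray while leaving the optimal \emph{direction} unchanged; the covariance identity of the first paragraph and the transfer of uniqueness are routine.
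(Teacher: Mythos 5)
Your proof is correct, and it shares the paper's starting point but diverges at the key technical step. Like the paper, you expand $\tfrac{1}{n}\|\viota_n - \mX\vbeta\|_2^2 = 1 - 2\widehat{\vmu}'\vbeta + \vbeta'\widehat{\mG}\vbeta$ with $\widehat{\mG} = \tfrac{1}{n}\mX'\mX = \widehat{\mSigma} + \widehat{\vmu}\widehat{\vmu}'$, so that the only issue is the extra rank-one term $(\widehat{\vmu}'\vbeta)^2$. The paper handles it by applying Proposition \ref{prp:quad} with $\mS = \widehat{\mG}$ and then converting between the $\widehat{\mG}$-ellipsoid and the $\widehat{\mSigma}$-ellipsoid via Lemma \ref{lem:1}, substituting the optimal value $(\widehat{\vmu}'\widehat{\vlambda})^2$ into the constraint $\vlambda'\widehat{\mG}\vlambda - (\widehat{\vmu}'\vlambda)^2 = 1$; that substitution replaces a non-ellipsoidal constraint set by an ellipsoid through the optimum and tacitly assumes the maximizer is unchanged. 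You instead compare the regression objective directly with the $\widehat{\mSigma}$-quadratic objective of Proposition \ref{prp:quad} by minimizing ray by ray over the cone: with $u = \vm'\vb$, $v = \vb'\mS\vb$, the two per-ray minima are $1 - R$ and $1/(1+R)$ with $R = u^2/v$, both strictly decreasing in $R$, so both problems select the same $R$-maximizing direction and differ only in the scale along it, which the normalization removes. This buys two things: it makes rigorous exactly the step the paper leaves implicit (your monotone-in-$R$ argument is the missing justification for the paper's constraint substitution), and it cleanly transfers uniqueness from the regression minimizer to the quadratic minimizer so that the hypotheses of Proposition \ref{prp:quad} are verifiably met. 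The price is a slightly longer elementary computation; the paper's version is shorter but leans harder on Lemma \ref{lem:1}. Your treatment of the case $\widehat{\vbeta} = \vzeros$ (empty set of directions with $\vm'\vb > 0$, i.e.\ $\vm \in \calC^\circ$) matches the paper's contradiction argument in substance.
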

			% BSS
			If $\calC = \calD_k$, this result implies that $T_k$ can be computed using sparse linear regression or `best subset selection' (BSS), which is defined as follows
			\begin{align}\label{eq:bss}
				\widehat{\vbeta} = \argmin_{\substack{\vbeta \\ \|\vbeta\|_{0} \leq k}}\|\viota_n - \mX\vbeta\|_2^2.
			\end{align}
			While this optimization problem was long deemed computationally infeasible for $p \gtrsim 40$, \citet{bertsimas2016best} have recently shown that it can be solved for problems of practical size within reasonable time by using gradient descent methods and mixed-integer optimization solvers. In particular, they solve BSS with $n$ of order $10^3$ and $p$ of order $10^2$ in minutes. If $p$ and $n$ are very large or if re-sampling is used to construct a critical value, then this may still be costly in terms of computation to conduct a test. However, promising work by \citet{hazimeh2018fast} shows that problems of order $p = 10^6$ can be approximated in seconds. An alternative approach is to approximate the solution of \eqref{eq:bss} using greedy methods such as forward stepwise selection (see e.g. \citealt{hastie2009statistical, hastie2017extended}).
			
		\subsection{Computation: diagonal covariance matrix}\label{sec:sparse:diag}				
			Let us now also consider the computation of the statistic for the case that $\mS$ is a positive definite diagonal estimator. Popular examples of diagonal estimators are $\diag(\widehat{\mSigma})$ or the pooled variance estimator $\tr(\widehat{\mSigma}/p)\mI_p$. In order to distinguish the statistic for the diagonal covariance estimator, I use the superscripted notation $T_k^d$. Proposition \ref{prp:diag} then shows that if $\mS > 0$, $T_{k}^d$ can be computed by solving
			\begin{align*}
				\min_{\substack{\vbeta \\ \|\vbeta\|_{0} \leq k}} \|\mS^{-\frac{1}{2}}\widehat{\vmu} - \vbeta\|_2^2.
			\end{align*}
			It is straightforward to show that $\widehat{\vbeta} = \mS^{-1}\widehat{\vmu}_{\calJ_k}$, where $\calJ_k$ contains the largest $k$ elements of $\mS^{-1}\widehat{\vmu}$.\footnote{See e.g. Proposition 3 of \citet{bertsimas2016best} for a formal proof.} This leads to the closed form
			\begin{align*}
				T_{k}^d = \sqrt{\widehat{\vmu}'\mS^{-1}\widehat{\vmu}_{\calJ_k}}.
			\end{align*}
			If $\mS = \text{diag}(\widehat{\mSigma})$, the $T_{k}^d$  statistic is closely related to threshold-type statistics (see e.g. \citealp{fan1996test,zhong2013tests}), which include the screening statistic proposed by \citet{fan2015power}. They define the screening statistic as
			\begin{align*}
				J_0 = \sqrt{p}\widehat{\vmu}'\text{diag}(\widehat{\mSigma})^{-1}\widehat{\vmu}_{\calJ},
			\end{align*}
			where $\calJ = \{j \in \{1, \dots, p\}\ |\ |\widehat\mu_j| > \widehat\sigma_j \delta\}$, for a given threshold value $\delta$. Here, \citet{fan2015power} choose $\delta$ to grow sufficiently fast so that using 0 as critical value results in a test with asymptotic size 0. Notice that if $\delta$ is instead chosen such that $|\calJ| = k$, then $J_0$ is a monotone function of $T_{k}^d$: $J_0 = \sqrt{p}(T_{k}^d)^2$. Hence, the difference is that a threshold-type statistic implicitly defines the sparsity level, while the $T_k^d$ explicitly specifies the sparsity level of the alternative. The latter option seems more intuitive, but there may be applications for which a threshold-type specification of the sparsity level is desirable.

		\subsection{Lasso}\label{sec:sparse:lasso}
			Another popular tool to obtain sparse solutions is Lasso \citep{tibshirani1996regression}, which is linear regression under an $\ell_1$-norm restriction. Unlike $\calD_k$, the Lasso constraint set $\calB_t = \{\vlambda\ |\ \|\vlambda\|_1 \leq t\}$, where $t \geq 0$, is not a scone nor even a cone. However, it is possible to transform a non-cone restriction to a cone restriction, which I will demonstrate in this section.
			
			Let $\Lambda_t =  \calV \cap \calB_t$ be the intersection of the Lasso constraint set and $\calV$, for some $t \geq 0$. Then we can construct the cone $\calC_{\Lambda_t}= \{\vlambda\ |\ \gamma\vlambda \in \Lambda_t, \exists \gamma > 0\} \cup \{\vzeros\}$ that consists of all the scalar multiples of $\Lambda_t$ and the origin. This leads to the alternative
			\begin{align*}
				H_1 : \vmu \in \calC_{\Lambda_t}^\emptyset.
			\end{align*}
			Notice that if $\mSigma = \mI_p$, then this alternative coincides with the 1-sparse alternative when $t = 1$, and the Wald statistic if $t \geq \sqrt{p}$. A visual illustration of the two-dimensional case is given in Figure \ref{fig:L1}, for $t = 1.2$. This figure shows that the alternative corresponding to the Lasso constraint is set of cones that are centered around the axes, and whose angles are regulated by the parameter $t$. Therefore, the alternative induced by Lasso can be viewed as a \textit{near-sparse} alternative. The interpretation of near-sparse alternatives extends to higher dimensions.
		
			By Proposition \ref{prp:regression}, the computation of the test statistic $T_{\calC_{\Lambda_t}}$ reduces to Lasso regression
			\begin{align*}
				\widehat{\vbeta}
					= \argmin_{\substack{\vbeta \\ \|\vbeta\|_{1} \leq t/\sqrt{\widehat{\vbeta}'\widehat{\mSigma}\widehat{\vbeta}}}}\|\viota_n - \mX\vbeta\|_2^2,
			\end{align*}
			so that $\widehat{\vlambda} = \widehat{\vbeta} / \sqrt{\widehat{\vbeta}'\widehat{\mSigma}\widehat{\vbeta}}$.\footnote{Unfortunately, as of writing this manuscript, all existing efficient implementations of Lasso that I could find fail for a constant dependent variable. This has prohibited the inclusion of the Lasso based test in the Monte Carlo experiments. The authors of several prominent Lasso implementations have been made aware of this issue.}

%			Using the penalty form of Lasso exposes an alternative interpretation. Namely that an $\ell_1$ penalty shrinks the mean estimate $\widehat{\vmu}$. In particular, using Proposition \ref{prp:regression}, the penalized form of the Lasso problem given by
%			\begin{align*}
%				\widehat{\vbeta} 
%				&= \argmin_{\vbeta} \tfrac{1}{n}\|\viota_n - \mX\vbeta\|_2^2 + \tau_1 \|\vlambda\|_1\\
%				&= \sqrt{\widehat{\vbeta}'\widehat{\mSigma}\widehat{\vbeta}} \argmax_{\substack{\vlambda \\ \vlambda'\widehat{\mSigma}\vlambda = 1}} \text{sign}(\widehat{\vmu})([|\widehat{\vmu}| - \tau_1\viota_p]^+)'\vlambda,
%			\end{align*}
%			where $\tau_1$ is the $\ell_1$ penalty parameter, $|\cdot|$ is the element-wise absolute value, and $[\ \cdot\ ]^+$ is the element-wise positive-part operator. This shows that Lasso shrinks each element of the mean estimate by $\tau_1$, to produce the estimate $\vm = \text{sign}(\widehat{\vmu})([|\widehat{\vmu}| - \tau_1\viota_p]^+)$.\footnote{Similarly, a Ridge or $\ell_2$-penalty \citep{hoerl1970ridge} inflates the diagonals of the covariance estimate to $\mS = \widehat{\mSigma} + \tau_2\mI_p$, where $\tau_2 \geq 0$ is the $\ell_2$-penalty parameter.}
			\begin{figure}
				\includegraphics[width = 12 cm]{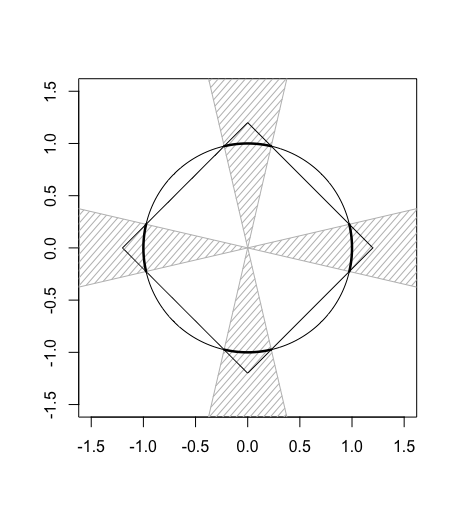}
				\caption{\textit{The construction of a Lasso sub-alternative in two dimensions. The area inside the diamond corresponds to the Lasso constraint set $\calB_{1.2}$, the circle is the unit sphere $\calU$, the thicker sections of the circle correspond to the intersection $\Lambda_{1.2} = \calU \cap \calB_{1.2}$, and the cone corresponding to the intersection $\calC_{\Lambda_{1.2}}^{\ell_1}$ is shown as the gray dashed areas.}}
				\label{fig:L1}
			\end{figure}
		
		\subsection{Critical values} \label{sec:sparse:critval}
			As the analytical or even the asymptotic derivation of a critical value for $T_{\calC}$ is non-trivial, I instead choose to compute a critical value using a re-sampling method. In particular, I use a reflection-based randomization test. Under a symmetry assumption on $\mE$, this leads to an exact critical value for tests with null hypothesis $H_0: \vmu = \vzeros$, so that size is controlled even in finite samples.\footnote{\citet{koning2019exact} apply a reflection-based randomization test to a composite null hypothesis. Although they are unable to formally show that such a test maintains size for all parameter values under the null, they do provide simulation results that support this conjecture.} If this symmetry assumption holds asymptotically, \citet{canay2017randomization} have shown that such a randomization test has asymptotic size control. The remainder of this section briefly describes the construction of such a critical value (see e.g. Ch. 15 of \citet{lehmann2006testing} for a more general discussion). 
			
			Let $\calR$ contain all $n \times n$ reflection matrices, which are diagonal matrices with diagonal elements in $\{-1, 1\}$. Then $\calR$ constitutes the reflection group of order $N = 2^n$. The assumption that permits the construction of an exact test is that $\mE$ and $\mR\mE$ share the same distribution, for all $\mR \in \calR$. This is equivalent to assuming that the distribution of $\mE$ is reflection symmetric.

			In order to perform a test, let $\calR^M := \{\mI_n, \mR_2, \dots, \mR_M\} \subseteq \calR$, where $\mR_i$ is drawn without replacement from $\calR \setminus \{\mI_n\}$, $i = 2, \dots, M$. Here, $M \leq N$ is the number of re-samples. Denote the set containing the reflection transformed data by $\calR^M_{\mX} = \{\mX, \mR_2\mX, \dots,  \mR _M\mX\}$. Let $T_{\calC}(\calR^M_{\mX} ) = \{T_{\calC}(\mX), T_{\calC}(\mR_2\mX), \dots, T_{\calC}(\mR_M\mX) \}$ be the set containing test statistic of interest computed on each of the reflection transformed data sets. Notice that under $H_0 : \vmu = \vzeros$ we have $\mX = \mE$, so that each of the elements of $T_{\calC}(\calR_{\mX}^M)$ follow the same distribution. So, under $H_0$, the probability that $T_{\calC}(\mX)$ is larger than the $1 - \alpha$ quantile $c_\alpha$ among the elements of $T_{\calC}(\calR_{\mX}^M)$ is $\alpha$. Therefore, a test with critical value $c_\alpha$ that rejects if $T_{\calC}(\mX) > c_\alpha$ has significance level $\alpha$.

	\section{Simulation experiments}\label{sec:mc} 
		In this section, the Monte Carlo simulation experiments are presented. In particular, I consider testing against sparse alternative, where the level of sparsity is varied. The goal of the experiments is to analyze the size and power properties of the $T_{\calC}$ test, and to compare it with the power enhancement approach of \citet{fan2015power}. For this purpose, I will vary the sample size, the number of parameters, the underlying covariance matrix and the level of sparsity in the parameter vector. Code to replicate the experiments will be made available at \url{https://github.com/nickwkoning}.
		
		\subsection{Data generation}
		
			I generate the $n \times p$ data matrix $\mX = \viota_n\vmu' + \mE\mA$, where the elements of $\mE$ are independently drawn from a standard normal distribution, and $\mA'\mA = \mSigma$, where $\mSigma$ has off-diagonal elements $\rho \in \{0,\ .5,\ .7\}$, $i \neq j$ and diagonal elements equal to 1. For the number of observations and parameters, I consider $n \in \{30,\ 250\}$ and $p \in \{100,\ 300,\ 500\}$,  respectively.
			
			The parameter vector of interest $\vmu$ has elements $\mu_j = b_{n, s} > 0$ for $1 \leq j \leq s$ and $\mu_j = 0$, if $s < j \leq p$. So, $\vmu$ is $s$-sparse. The values of $b_{n, s}$ are chosen to ensure that all tests that control size have rejection rates smaller than 1 at the significance level $\alpha = 0.05$. In particular, I use
				
			\begin{align*}
				b_{n, s}
					= 
					\begin{cases}
						0.75, \text{  if }\ n = 30,\ \ \, s = 1, \\
						0.25, \text{  if }\ n = 30,\ \ \, s = 20, \\
						0.25, \text{  if }\ n = 250,\ s = 1, \\
						0.07, \text{  if }\ n = 250,\ s = 20.
					\end{cases}
			\end{align*}
			
		\subsection{Tests}
			On the data described above, I test the null hypothesis $H_0 : \vmu = \vzeros$ against the $s$-sparse alternative $H_1^s : \vmu \in \calD_s$. As this sparsity level may not be  known in practice, I also consider a possible  over- and underspecification of $s$ by testing against $H_1^{1} : \vmu \in \calD_1$, if $s = 20$, and against $H_1^{20} : \vmu \in \calD_{20}$, if $s = 1$. 
			
			In particular, I use the $T_k$ statistic where $k \in \{1, 20\}$. As estimators for $\vmu$ and $\mSigma$, I choose the sample mean $\vm = \mX'\viota_n/n$, and sample covariance $\mS = \widehat{\mSigma} := \mX'(\mI_n - \mP_{\viota_n})\mX/n$. In addition, I also consider the $T_{k}^d$ statistic with $k \in \{1, 20\}$, where $\mS = \text{diag}(\widehat{\mSigma})$. Note here that the $T_1^d$ statistic and $T_1$ statistic coincide.
			
			The $T_k$ statistic is computed using the methodology proposed by \citet{hazimeh2018fast}. To construct a critical value for $T_k$ and $T_k^d$, I use reflection based randomization as described in Section \ref{sec:sparse:critval}. Note that this will yield an exact test, as the normal distribution is reflection symmetric. The number of reflection re-samples used is 1000. Performing a single test with the $T_{20}$ statistic on the largest setting with $n = 250$ and $p = 500$ takes approximately 45 seconds on a standard 2017 edition 13 inch Macbook Pro.
			
			For the comparison to the power enhancement technique, an initial test and power enhancement test must be selected. As initial test I use the standard Wald test, and as statistic for the enhancement test I use the same screening statistic as used by \citet{fan2015power}.\footnote{One may argue that a comparison could instead be made to the `feasible Wald test' proposed as initial test by \citet{fan2015power}. However, this test relies on an additional sparsity assumption on $\mSigma$ which would cloud the comparison.} This screening statistic is given by
			\begin{align*}
				J_0 = \sqrt{p}\vm'\diag(\widehat{\mSigma})^{-1}\vm_{\calJ},
			\end{align*}
			with $\calJ = \{j \in \{1, \dots, p\}\ |\ |m_j| > \widehat{\sigma}_{j}\delta_{n,p}\}$, where $\widehat{\sigma}_j^2$ are the diagonal elements of $\widehat{\mSigma}$, and $\delta_{n,p} = \log\log n \sqrt{\log p}$.\footnote{The code provided the online supplementary material of  \citet{fan2015power} suggests that  $1.06\delta_{n,p}$, $\sqrt{1.5}\delta_{n,p}$ and $.9\delta_{n,p}$ were used in their numerical experiments, instead of $\delta_{n,p}$. It is unclear where these constants come from, as they are not mentioned in the paper. I therefore choose to simply use $\delta_{n,p}$, instead.} As critical value for the Wald statistic I use $pn/(n - p)$ times the $1 - \alpha$ quantile of the $F$ distribution with $p$ and $n - p$ degrees of freedom (also known as a Hotelling $T^2$ test). If $p > n$, the initial test is not defined and I reject with probability $\alpha$. As critical value for the enhancement test I use 0. The combined test then rejects if either the initial test or enhancement test rejects.\footnote{The formulation of the power enhancement technique here differs slightly from the one described by \citet{fan2015power} and is of the form analyzed by \citet{kock2019power}. This formulation permits the use of the power enhancement approach even if the test statistic for the initial test does not exist.} The simulation experiments are also conducted on the initial test to infer the contribution of the constituent tests to the performance of the power enhancement technique.
			
			For each combination of setting and test, I perform 1000 repetitions and record the proportion of rejections. The results are reported in Tables \ref{tab:null}, \ref{tab:s1} and \ref{tab:s20}. 
			
		\subsection{Results}
			Table \ref{tab:null} reports the rejection rates of the tests on data that was generated under the null hypothesis. As these rejection rates are used to infer the size of the tests, they should ideally be close to the nominal significance level $\alpha = 0.05$. As expected, the table shows that the $T_{\calC}$ tests and the Wald test all have good size control under $H_0$. 
			
			In contrast, the power enhancement test suffers from a small size distortion in the large sample ($n = 250$). Furthermore, as the power enhancement test is based on an asymptotic critical value it fails to control size in the small sample ($n = 30$). The rejection rates for the power enhanced test for $n = 30$ will therefore be ignored in the remainder without mention. 
			
			Table \ref{tab:s1} contains the rejection rates under the alternative where $s = 1$, so that only a single element of $\vmu$ is unequal to zero. As this data is generated under the alternative, the rejection rates are used to infer the power of the test, which should be as large as possible. Overall, the $T_1$ test outperforms all other tests. For the large sample ($n = 250$), the power enhanced test performs second best and delivers a large power improvement compared to the Wald test. Despite the mis-specification of the sparsity level, the $T_{20}$ test performs reasonably well if $n = 250$, substantially outperforming the Wald test. The $T_{20}$ test performs especially well under large correlations. As expected, the $T_{20}^d$ performs better than the $T_{20}$ test under no correlations, but loses power as the correlations increase.
			
			Table \ref{tab:s20} presents the results for $s = 20$, which are more diverse than the results in Table \ref{tab:null} and \ref{tab:s1}. The $T_{20}^d$ test outperforms the other tests if $\rho = 0$ as the sparsity level is now correctly specified. If $\rho \neq 0$ and $n = 30$, the $T_{1}$ test outperforms the $T_{20}$ test even though $s = 20$. This may be a consequence of the low rank of $\widehat{\mSigma}$, so that Condition \ref{con:rank} is barely satisfied. For $n = 250$ and $\rho \neq 0$, the power enhancement test performs well in the low-dimensional setting $p = 100$. This performance seems almost entirely carried by the initial Wald test, which performs well as $s$ is large compared to $p$. In the settings where $p > n$, the $T_{20}$ test substantially outperforms the other tests.

			From the results described above, the following intuitions are drawn.
			\begin{enumerate}
				\setstretch{1.5}
				\item[$\bullet$] The $T_k^d$ test performs well if $\mSigma$ is diagonal and $k$ is close to $s$. It loses power as $k$ is further from $s$ or $\mSigma$ is further from diagonal.
				\item[$\bullet$] If $\mSigma$ is not diagonal, the $T_k$ test performs well if $k$ is close to the true sparsity level $s$, and $n$ is sufficiently large compared to $k$.
				\item[$\bullet$] The power enhancement test trades a small size distortion for a possibly large power gain.
				\item[$\bullet$] The power enhancement test is competitive with the $T_s$ test if $s$ is small, but loses power compared to the $T_s$ test if $s$ is somewhat larger.
			\end{enumerate}
	
		\begin{table}[ht]
			%\small
			\centering
			\caption{Monte Carlo rejection rates if $H_0 : \vmu = \vzeros$ is true.}
			\begin{tabular}{rrrccc|ll}
				\hline
				\hline
				&&&\multicolumn{3}{c}{$T_{\calC}$} & \multicolumn{1}{c}{PE} & \multicolumn{1}{c}{Wald}\\
				\hline
				n & p & $\rho$ & $T_{1}$/$T_{1}^d$ & $T_{20}$  & $T_{20}^d$ &&\\
				\hline
				&		& 0   & .051 & .038  & .055 & .786 & .056\\ 
				& 100 	& .5  & .054 & .047 & .061 & .450 & .045\\ 
				&		& .7  & .045 & .048 & .054 & .270 & .059\\ 
				\hline
				&		& 0    & .041 & .043 & .053 & .895 & .043\\ 
				30 & 300 	& .5  & .056 & .055 & .046 & .434 & .049\\ 
				&		& .7  & .045 & .041 & .063 & .274 & .047\\ 
				\hline
				&		& 0    & .042 & .052 & .050 & .950 & .055\\ 
				&500	& .5  & .040 & .045 & .044 & .459 & .062\\ 
				&		& .7  & .056 & .043 & .048 & .261 & .049\\ 
				\hline
				\hline
				&   	& 0    & .051 & .054 & .042 & .080 & .058\\ 
				&100 	& .5  & .051 & .043 & .062 & .058 & .044\\ 
				&   	& .7  & .036 & .046 & .057 & .074 & .056\\ 
				\hline
				&		& 0    & .036 & .045 & .040 & .080 & .039\\ 
				250	&300	& .5  & .039 & .053 & .055 & .077 & .052 \\ 
				&		& .7  & .042 & .045 & .044 & .053 & .053\\ 
				\hline 
				&		& 0	   & .047  & .046 & .042 & .054 & .055\\ 
				&500	& .5  & .049 & .041 & .054 & .060 & .052\\ 
				&		& .7  & .047 & .049 & .058 & .060 & .049 \\ 
				\hline
				\hline
			\end{tabular} \\
			\vspace{.5cm}
			\begin{flushleft}
				\textit{The Monte Carlo rejection rates (between 0 and 1) for data generated under the null hypothesis $H_0 : \vmu = \vzeros$. Columns three to six contain the results for the reflection based $T_{\calC}$ tests. The column headed by PE contains the rejection rates for the power enhancement test, and the final column contains the results for the Wald test that functions as the initial test in the power enhancement test.}
			\end{flushleft}

			\label{tab:null}
		\end{table}

		\begin{table}[ht]
			%\small
			\centering
			\caption{Monte Carlo rejection rates if $H_1^{1} : \vmu \in \calD_1$ is true.}
			\begin{tabular}{rrrccc|ll}
				\hline
				\hline
				&&&\multicolumn{3}{c}{$T_{\calC}$} & \multicolumn{1}{c}{PE} & \multicolumn{1}{c}{Wald}\\
				\hline
				n & p & $\rho$ & $T_{1}$/$T_{1}^d$ & $T_{20}$  & $T_{20}^d$ & &\\
				\hline
				&		& 0    & .\textbf{682} & .068 & .477 & .985  & .053\\ 
				& 100 	& .5  & .\textbf{775} & .128 & .085 & .972 & .059\\ 
				&		& .7  & .\textbf{842} & .176 & .081 & .959 & .049\\ 
				\hline
				&		& 0    & .\textbf{551} & .055 & .341 & .980  & .053 \\ 
				30 & 300 	& .5  & .\textbf{654} & .107 & .083 & .955 & .057\\ 
				&		& .7  & .\textbf{792} & .147 & .073 & .938 & .045\\ 
				\hline
				&		& 0    & .\textbf{498} & .064 & .305 & .991 &  .056\\ 
				&500	& .5  & .\textbf{630} & .091 & .085 & .949 & .052\\ 
				&		& .7  & .\textbf{724} & .147 & .080 & .917 & .051\\ 
				\hline
				\hline
				&   	& 0    & .\textbf{743} & .324  & .482  & .665 & .195\\ 
				&100 	& .5  & .\textbf{810} & .644 & .101 & .752 & .455\\ 
				&   	& .7  & .864 & .\textbf{921} & .096 & .876 & .755 \\ 
				\hline
				&		& 0    & .\textbf{637} & .257 & .352  & .506 & .054\\ 
				250	&300	& .5  & .\textbf{741} & .453 & .099 & .473  & .056\\ 
				&		& .7  & .\textbf{826} & .726 & .081 & .515 & .053\\ 
				\hline 
				&		& 0	   & .\textbf{601}  &  .205 & .312 & .429 & .061\\ 
				&500	& .5  & .\textbf{694} & .384 & .074 & .434 & .057\\ 
				&		& .7  & .\textbf{789} & .610 & .096   & .398 & .046\\ 
				\hline
				\hline
			\end{tabular} \\
			\vspace{.5cm}
			\begin{flushleft}
				\textit{The Monte Carlo rejection rates (between 0 and 1) for data generated under the alternative hypothesis with sparsity level $\|\vmu\|_0 = 1$. Columns three to six contain the results for the reflection based $T_{\calC}$ tests. The column headed by PE contains the rejection rates for the power enhancement test, and the final column contains the results for the Wald test that functions as the initial test in the power enhancement test.}
			\end{flushleft}
			\label{tab:s1}
		\end{table}
		\begin{table}[ht]
		%\small
		\centering
		\caption{Monte Carlo rejection rates if $H_1^{20} : \vmu \in \calD_{20}$ is true.}
		\begin{tabular}{rrrccc|ll}
			\hline
			\hline
			&&&\multicolumn{3}{c}{$T_{\calC}$} & \multicolumn{1}{c}{PE} & \multicolumn{1}{c}{Wald}\\
			\hline
			n & p & $\rho$ & $T_{1}$/$T_{1}^d$ & $T_{20}$  & $T_{20}^d$ & &\\
			\hline
			&		& 0    & .374 & .102 & .\textbf{883} & .982 & .045\\ 
			& 100 	& .5  & .\textbf{258} & .118 & .217 & .796 & .053\\ 
			&		& .7  & .\textbf{280} & .162 & .201 & .567 & .048\\ 
			\hline
			&		& 0    & .205 & .060 & .\textbf{549} & .979  & .051 \\ 
			30 & 300 	& .5  & .\textbf{176} & .072 & .119 & .701 & .045\\ 
			&		& .7  & .\textbf{196} & .106 & .136 & .510 & .053\\ 
			\hline
			&		& 0    & .159 & .064 & .\textbf{409} & .980 & .060\\ 
			&500	& .5  & .\textbf{139} & .085 & .102 & .698 & .053\\ 
			&		& .7  & .\textbf{182} & .088 & .101 & .466 & .050\\ 
			\hline
			\hline
			&   	& 0    & .291 & .337  & .\textbf{752}  & .387 & .315\\ 
			&100 	& .5  & .203 & .537 & .130 & .\textbf{622} & .607\\ 
			&   	& .7  & .237 & .779 & .155 & .880 &.\textbf{898}\\ 
			\hline
			&		& 0    & .161 & .152 & .\textbf{387} & .094 & .052\\ 
			250	&300	& .5  & .141 & .\textbf{288} & .093 & .080  & .051\\ 
			&		& .7  & .175 & .\textbf{473} & .105 & .089 & .038\\ 
			\hline 
			&		& 0	   & .145 & .107 & .\textbf{287} & .088 & .041\\ 
			&500	& .5  & .123 & .\textbf{206} & .071 & .085 & .036\\ 
			&		& .7  & .138 & .\textbf{325} & .102 & .051 & .057\\ 
			\hline
			\hline
		\end{tabular} \\
		\vspace{.5cm}
		\begin{flushleft}
			\textit{The Monte Carlo rejection rates (between 0 and 1) for data generated under the alternative hypothesis with sparsity level $\|\vmu\|_0 = 20$. Columns three to six contain the results for the reflection based $T_{\calC}$ tests. The column headed by PE contains the rejection rates for the power enhancement test, and the final column contains the results for the Wald test that functions as the initial test in the power enhancement test.}
		\end{flushleft}
		\label{tab:s20}
	\end{table}

	\clearpage
	\bibliographystyle{abbrvnat}
	\bibliography{bibfile}	
	\clearpage
	\section{Appendix A}\label{app:A}
	\setcounter{prp}{0}
	
	In order to present the proofs of Proposition \ref{prp:quad}, I first provide the following simple lemma, where the notation $\gamma \calA$ is used to mean the set of containing the elements of $\calA \subseteq \mathbb{R}^p$ scalar multiplied by $\gamma$.
	\begin{lem}\label{lem:1}
		Let $\calC$ be a cone and $\gamma > 0$. If Condition \ref{con:exist} holds, then
		\begin{align*}
			\argmax_{\substack{\vlambda \in \calC \\ \vlambda'\mS\vlambda = \gamma^2}} 
				\vm'\vlambda
			=
			\gamma \argmax_{\substack{\vlambda \in \calC \\ \vlambda'\mS\vlambda = 1}} 
				\vm'\vlambda.
		\end{align*}
	\end{lem}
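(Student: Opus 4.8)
The plan is to exhibit an explicit bijection between the two feasible sets induced by scaling, and to observe that along this bijection the objective is merely multiplied by a positive constant, so that the sets of maximizers correspond.

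First I would fix $\gamma > 0$ and consider the map $\phi_\gamma : \mathbb{R}^p \to \mathbb{R}^p$ given by $\phi_\gamma(\vlambda) = \gamma\vlambda$. Since $\calC$ is a cone it is closed under multiplication by the positive scalar $\gamma$, so $\phi_\gamma(\calC) \subseteq \calC$; applying the same reasoning to $\gamma^{-1} > 0$ yields the reverse inclusion, so $\phi_\gamma$ restricts to a bijection of $\calC$ onto itself with inverse $\phi_{\gamma^{-1}}$. Next I would check that $\phi_\gamma$ carries the constraint $\vlambda'\mS\vlambda = 1$ to the constraint $\vlambda'\mS\vlambda = \gamma^2$: for $\vlambda \in \calC$ one has $\phi_\gamma(\vlambda)'\mS\phi_\gamma(\vlambda) = \gamma^2\,\vlambda'\mS\vlambda$, which equals $\gamma^2$ exactly when $\vlambda'\mS\vlambda = 1$. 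Hence $\phi_\gamma$ is a bijection from $\{\vlambda \in \calC : \vlambda'\mS\vlambda = 1\}$ onto $\{\vlambda \in \calC : \vlambda'\mS\vlambda = \gamma^2\}$. (Condition \ref{con:exist} ensures these sets are non-empty and that the relevant maxima are attained, so the statement is non-vacuous; the set equality itself does not require it.)

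Finally, along this bijection the objective transforms by $\vm'\phi_\gamma(\vlambda) = \gamma\,\vm'\vlambda$. Because $\gamma > 0$, multiplying the objective by $\gamma$ does not change the set of maximizers: $\vlambda_0$ maximizes $\vm'\vlambda$ over $\{\vlambda \in \calC : \vlambda'\mS\vlambda = 1\}$ if and only if $\phi_\gamma(\vlambda_0) = \gamma\vlambda_0$ maximizes $\vm'\vlambda$ over $\{\vlambda \in \calC : \vlambda'\mS\vlambda = \gamma^2\}$. Applying $\phi_\gamma$ to every element of the first argmax set therefore produces exactly the second, which is the claimed identity. I do not expect a real obstacle here; the only point requiring a moment's care is noting that it is the cone property (rather than mere convexity) of $\calC$ that makes $\phi_\gamma$ a self-bijection of $\calC$, and that positivity of $\gamma$ (so that the rescaling of the objective is order-preserving) is what lets us pass the argmax through the map.
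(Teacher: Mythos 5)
Your proof is correct and follows essentially the same route as the paper: both arguments amount to the change of variables $\vlambda \mapsto \gamma\vlambda$, using the cone property of $\calC$ to see that the feasible sets correspond and the positivity of $\gamma$ to see that the objective is rescaled monotonically, with Condition \ref{con:exist} invoked only to guarantee attainment. Your version merely makes the underlying bijection explicit where the paper writes it as a chain of argmax identities.
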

	\begin{proof}
		As Condition \ref{con:exist} holds, a maximizing argument exists. I find
		\begin{align*}
			\argmax_{\substack{\vlambda \in \calC \\ \vlambda'\mS\vlambda = \gamma^2}} 
					\vm'\vlambda
				&= \argmax_{\substack{\vlambda \in \calC \\ \tfrac{1}{\gamma^2}\vlambda'\mS\vlambda = 1}} 
					\vm'\vlambda
				= \argmax_{\substack{(\tfrac{1}{\gamma}\vlambda) \in \calC \\ (\tfrac{1}{\gamma}\vlambda)'\mS(\tfrac{1}{\gamma}\vlambda) = 1}} 
					\vm'\vlambda
				= \gamma\argmax_{\substack{\vlambda \in \calC \\ \vlambda'\mS\vlambda = 1}} 
					\vm'\vlambda. %\qedhere
		\end{align*}
	\end{proof}

	\begin{prp}
		Let $\vlambda'\mS\vlambda > 0$ for all $\vlambda \in \calC^\emptyset$. Let $\widehat{\vbeta} = \argmin_{\vbeta \in \calC} 1 - 2\vm'\vbeta + \vbeta'\mS\vbeta$ and $\widehat{\vlambda} = \argmax_{\substack{\vlambda \in \calS^0\cap\calC}} \vm'\vlambda$ be unique optimizers. Then
		\begin{align*}
			\widehat{\vlambda} = \frac{\widehat{\vbeta}}{\sqrt{\widehat{\vbeta}'\mS\widehat{\vbeta}}},
		\end{align*}
		if $\widehat{\vbeta} \neq \vzeros$ and $\widehat{\vlambda} = \vzeros$, otherwise.
	\end{prp}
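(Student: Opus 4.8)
The plan is to reduce the constrained maximization defining $\widehat{\vlambda}$ to the quadratic minimization by exploiting the ``polar'' structure of the cone $\calC$ relative to the quadratic form $\mS$. First I would record the elementary observation that, because $\calC$ is a cone and $\vlambda'\mS\vlambda > 0$ on $\calC^\emptyset$, every $\vbeta \in \calC^\emptyset$ admits a unique representation $\vbeta = r\vlambda$ with $r = \sqrt{\vbeta'\mS\vbeta} > 0$ and $\vlambda = \vbeta/r \in \calS \cap \calC$, and conversely $r\vlambda \in \calC$ for all $r > 0$ and $\vlambda \in \calS \cap \calC$. Under this bijection the objective becomes $1 - 2\vm'\vbeta + \vbeta'\mS\vbeta = 1 - 2r(\vm'\vlambda) + r^2$, so the minimization over $\calC$ decouples into an angular part over $\calS \cap \calC$ and a radial part over $r > 0$, together with the single extra point $\vbeta = \vzeros$.

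Next I would carry out the radial minimization. For fixed $\vlambda \in \calS \cap \calC$ the parabola $r \mapsto 1 - 2r(\vm'\vlambda) + r^2$ has, on $r > 0$, infimum $1 - (\vm'\vlambda)^2$ attained at $r = \vm'\vlambda$ when $\vm'\vlambda > 0$, and infimum $1$ (not attained) when $\vm'\vlambda \le 0$. Writing $T_{\calC} := \vm'\widehat{\vlambda} = \max_{\vlambda \in \calS^0 \cap \calC} \vm'\vlambda$ and noting $T_{\calC} \ge 0$ since the origin belongs to $\calS^0 \cap \calC$, and adding back the value $1$ at $\vbeta = \vzeros$, this yields $\min_{\vbeta \in \calC}(1 - 2\vm'\vbeta + \vbeta'\mS\vbeta) = 1 - T_{\calC}^2$; the location of the minimizer is then settled by the two cases below.

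Then I would split into two cases. If $T_{\calC} > 0$, then the origin is not a maximizer of $\vm'\vlambda$ over $\calS^0 \cap \calC$, so $\widehat{\vlambda} \in \calS \cap \calC$, whence $\widehat{\vlambda}'\mS\widehat{\vlambda} = 1$ and $\vm'\widehat{\vlambda} = T_{\calC} > 0$; the radial computation shows $\widehat{\vbeta} = T_{\calC}\widehat{\vlambda}$ attains the quadratic minimum and is nonzero (its value $1 - T_{\calC}^2$ is strictly below the value $1$ at $\vzeros$), so by the assumed uniqueness it is the $\widehat{\vbeta}$ of the statement, and since $\widehat{\vbeta}'\mS\widehat{\vbeta} = T_{\calC}^2$ we obtain $\widehat{\vbeta}/\sqrt{\widehat{\vbeta}'\mS\widehat{\vbeta}} = \widehat{\vlambda}$. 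If $T_{\calC} = 0$, then uniqueness of $\widehat{\vlambda}$ forces $\widehat{\vlambda} = \vzeros$ and $\vm'\vlambda < 0$ for every $\vlambda \in \calS \cap \calC$ (any $\vlambda$ with $\vm'\vlambda = 0$ would be a second maximizer alongside the origin); consequently $1 - 2\vm'\vbeta + \vbeta'\mS\vbeta > 1$ for every $\vbeta \in \calC^\emptyset$ while it equals $1$ at $\vzeros$, so $\widehat{\vbeta} = \vzeros$, matching the claim.

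The step I expect to be the main obstacle is the handling of the degenerate configurations rather than the algebra: making sure that the radial optimum is genuinely attained (so the exhibited $\widehat{\vbeta}$ really minimizes, not merely infimizes), disposing of the boundary case $\vm'\vlambda = 0$, and reconciling the two uniqueness hypotheses with the possibility that $\calS \cap \calC$ is empty. Here I would lean on the closedness of $\calC$ and on the hypothesis $\vlambda'\mS\vlambda > 0$ on $\calC^\emptyset$ — which, via compactness of $\calC \cap \{\|\vlambda\|_2 = 1\}$, yields $\vbeta'\mS\vbeta \ge m\|\vbeta\|_2^2$ on $\calC$ for some $m > 0$ and hence coercivity of the quadratic objective — to secure the needed attainment.
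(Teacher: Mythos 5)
Your proof is correct, and it rests on the same underlying mechanism as the paper's --- positive homogeneity of the cone lets the radial scale be decoupled from the direction --- but you run the argument in the opposite direction and more explicitly. The paper starts from $\widehat{\vbeta}$, restricts the minimization to the level set $\{\vlambda\in\calC : \vlambda'\mS\vlambda = \widehat{\vbeta}'\mS\widehat{\vbeta}\}$ (on which the quadratic term is constant, so minimizing $1-2\vm'\vlambda+\vlambda'\mS\vlambda$ is the same as maximizing $\vm'\vlambda$), and invokes a scaling lemma to map that level set back to $\calS$. You instead start from $\widehat{\vlambda}$, perform the one-dimensional radial minimization in closed form, and thereby obtain the extra information that $\min_{\vbeta\in\calC}\left(1-2\vm'\vbeta+\vbeta'\mS\vbeta\right) = 1 - T_{\calC}^2$ with minimizer $T_{\calC}\widehat{\vlambda}$. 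That extra mileage buys you a cleaner degenerate case: when $T_{\calC}=0$ you show directly that the objective exceeds $1$ on $\calC^{\emptyset}$ and equals $1$ at the origin, whereas the paper handles $\widehat{\vbeta}=\vzeros$ by a contradiction argument involving a limit $\gamma\to 0^{+}$ (which, as printed, contains an undefined $\widetilde{\vlambda}$ and says ``minimizer'' where ``maximizer'' is meant). Your closing worry about attainment is unnecessary --- the proposition assumes both optimizers exist and are unique, and in the nondegenerate case you exhibit the radial optimum explicitly --- but invoking closedness of $\calC$ and the restricted eigenvalue condition there does no harm. The only point worth making explicit is that $T_{\calC}\widehat{\vlambda}\in\calC$ because $\calC$ is a cone, so uniqueness of $\widehat{\vbeta}$ really does identify it; with that said, the argument is complete.
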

	\begin{proof}
		I will consider two cases: $\widehat{\vbeta} \neq \vzeros$ and $\widehat{\vbeta} = \vzeros$. \\
		
		\noindent \textbf{Case} $\widehat{\vbeta} \neq \vzeros$ \\
		I find
		\begin{align*}
			\widehat{\vlambda} 
				&=	\argmax_{
						 \substack{\vlambda \in \calC \\ 
									     \vlambda'\mS\vlambda = 1
						 }
					 } 
					 \vm'\vlambda 
				=  \tfrac{1}{\sqrt{\widehat{\vbeta}'\mSigma\widehat{\vbeta}}}
					\argmax_{
						\substack{\vlambda \in \calC \\ 
							\vlambda'\mS\vlambda = \widehat{\vbeta}'\mS\widehat{\vbeta}
						}
					} 
					\vm'\vlambda \\
				&=  \tfrac{1}{\sqrt{\widehat{\vbeta}'\mS\widehat{\vbeta}}}
					\argmin_{
						\substack{\vlambda \in \calC \\ 
							\vlambda'\mS\vlambda = \widehat{\vbeta}'\mS\widehat{\vbeta}
					 	}
					} 
					1 - 2\vm'\vlambda + \vlambda'\mS\vlambda
				=  \tfrac{1}{\sqrt{\widehat{\vbeta}'\mS\widehat{\vbeta}}}\widehat{\vbeta},
		\end{align*}
		where the second equality follows from Lemma \ref{lem:1}, the third equality from the fact that $\vlambda'\mS\vlambda$ is constant due to the constraint, and the final equality from the definition of $\widehat{\vbeta}$. \\

		\noindent \textbf{Case} $\widehat{\vbeta} = \vzeros$ \\
		I will prove that $\widehat{\vlambda} = \vzeros$ by contradiction. Suppose that $\widehat{\vlambda} \in \calC^\emptyset$ is arbitrarily given. Then
		$\widehat{\vlambda}'\mS\widehat{\vlambda} - 2\vm'\widehat{\vlambda} > 0$, as $\widehat{\vbeta} = \vzeros$ is a unique minimizer. So $\tfrac{1}{2} > \vm'\widehat{\vlambda} / \widehat{\vlambda}'\mS\widehat{\vlambda}$, because we assumed that $\vlambda'\mS\vlambda > 0$, for all $\vlambda \in \calC^{\emptyset}$. As $\widehat{\vlambda} \in \calC^\emptyset$, we have that $\gamma\widehat{\vlambda} \in \calC^\emptyset$ for all $\gamma > 0$. Then $\lim\limits_{\gamma \to 0^+}  \vm'(\gamma\widetilde{\vlambda}) / (\gamma\widetilde{\vlambda})'\mS(\gamma\widetilde{\vlambda}) = \lim\limits_{\gamma \to 0^+} \tfrac{1}{\gamma} \vm'\widehat{\vlambda} / \widehat{\vlambda}'\mS\widehat{\vlambda} < 1/2$, which implies $\vm'\widehat{\vlambda} \leq 0$. This contradicts the assumption that $\widehat{\vlambda}$ is a unique minimizer. Hence, $\widehat{\vlambda} = \vzeros$.
	\end{proof}

	\begin{prp}
		Let $\mS > 0$ be a diagonal matrix and let $\calD$ be a scone. Suppose that $\widehat{\vlambda} = \argmax_{\substack{\vlambda \in \calS^0\cap\calD}} \vm'\vlambda$ and $\widehat{\vbeta} = \argmin_{\substack{\vbeta \in \calD}} \|\mS^{-\tfrac{1}{2}}\vm- \vbeta\|_2^2$ are unique optimizers. Then
		\begin{align*}
		\widehat{\vlambda} 
		= \mS^{-\tfrac{1}{2}}\widehat{\vbeta}/\sqrt{\widehat{\vbeta}'\widehat{\vbeta}},
		\end{align*}
		if $\widehat{\vbeta} \neq \vzeros$ and $\widehat{\vlambda} = \vzeros$, otherwise.
	\end{prp}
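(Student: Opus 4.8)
The plan is to reduce this to Proposition \ref{prp:quad} by a change of variables that exploits the scone structure of $\calD$. Since $\mS > 0$ is diagonal, both $\mS^{\frac12}$ and $\mS^{-\frac12}$ are positive-definite diagonal matrices, so the defining closure property of a scone gives $\vbeta \in \calD \iff \mS^{-\frac12}\vbeta \in \calD$, with the linear map $\vbeta \mapsto \mS^{-\frac12}\vbeta$ a bijection of $\calD$ onto itself which also carries $\calD^\emptyset$ onto $\calD^\emptyset$ (because $\mS^{-\frac12}\vbeta = \vzeros$ iff $\vbeta = \vzeros$). This is the structural fact that makes the reduction work.

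First I would expand the objective of the minimum-distance problem as $\|\mS^{-\frac12}\vm - \vbeta\|_2^2 = \vm'\mS^{-1}\vm - 2\vm'\mS^{-\frac12}\vbeta + \vbeta'\vbeta$ and observe that the leading term does not depend on $\vbeta$, so adding $1$ and dropping $\vm'\mS^{-1}\vm$ leaves the minimizer unchanged; thus $\widehat{\vbeta} = \argmin_{\vbeta \in \calD} 1 - 2\vm'\mS^{-\frac12}\vbeta + \vbeta'\vbeta$. Next I substitute $\vbeta = \mS^{\frac12}\vgamma$: by the bijection above, minimizing over $\vbeta \in \calD$ is equivalent to minimizing over $\vgamma \in \calD$, and the objective becomes $1 - 2\vm'\vgamma + \vgamma'\mS\vgamma$ (using that $\mS^{\frac12}$ is symmetric and $\mS^{\frac12}\mS^{\frac12} = \mS$). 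Writing $\widehat{\vgamma} = \mS^{-\frac12}\widehat{\vbeta}$, this identifies $\widehat{\vgamma}$ as the minimizer of $1 - 2\vm'\vgamma + \vgamma'\mS\vgamma$ over $\vgamma \in \calD$; it is unique because $\vbeta \mapsto \mS^{-\frac12}\vbeta$ is a bijection and $\widehat{\vbeta}$ is assumed unique.

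Then I would invoke Proposition \ref{prp:quad} with $\calC = \calD$: its hypothesis $\vlambda'\mS\vlambda > 0$ for all $\vlambda \in \calD^\emptyset$ holds automatically since $\mS > 0$, and $\widehat{\vlambda} = \argmax_{\vlambda \in \calS^0 \cap \calD}\vm'\vlambda$ is assumed unique. Proposition \ref{prp:quad} gives $\widehat{\vlambda} = \widehat{\vgamma}/\sqrt{\widehat{\vgamma}'\mS\widehat{\vgamma}}$ when $\widehat{\vgamma} \neq \vzeros$ and $\widehat{\vlambda} = \vzeros$ otherwise. Substituting back, $\widehat{\vgamma} = \mS^{-\frac12}\widehat{\vbeta}$ and $\widehat{\vgamma}'\mS\widehat{\vgamma} = \widehat{\vbeta}'\mS^{-\frac12}\mS\mS^{-\frac12}\widehat{\vbeta} = \widehat{\vbeta}'\widehat{\vbeta}$, while $\widehat{\vgamma} = \vzeros \iff \widehat{\vbeta} = \vzeros$, which is exactly the claimed formula. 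The only point requiring care is the change of variables — one must check that it is the scone closure, rather than mere conicity, that licenses replacing the constraint $\vbeta \in \calD$ by $\vgamma \in \calD$ after $\vbeta = \mS^{\frac12}\vgamma$, and that it transports uniqueness and the degenerate case correctly; beyond that the argument is bookkeeping and I do not anticipate a real obstacle.
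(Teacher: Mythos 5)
Your proof is correct, and it rests on the same essential mechanism as the paper's — the change of variables $\vbeta = \mS^{\frac12}\vgamma$, licensed by the scone's invariance under positive-definite diagonal multiplication — but you organize the argument differently. The paper starts from the maximization problem defining $\widehat{\vlambda}$, transforms the ellipsoid constraint $\vlambda'\mS\vlambda = 1$ into a sphere constraint, and then re-runs the Lemma 1 rescaling argument directly to connect the spherical maximization to the minimum-distance problem; it handles the degenerate case by repeating (by reference) the contradiction argument from Proposition \ref{prp:quad}. You instead start from the minimum-distance problem, expand the square, discard the constant $\vm'\mS^{-1}\vm$, and show after substitution that $\widehat{\vgamma} = \mS^{-\frac12}\widehat{\vbeta}$ is precisely the minimizer of $1 - 2\vm'\vgamma + \vgamma'\mS\vgamma$ over $\calD$, so that Proposition \ref{prp:quad} applies verbatim with $\calC = \calD$; the identities $\widehat{\vgamma}'\mS\widehat{\vgamma} = \widehat{\vbeta}'\widehat{\vbeta}$ and $\widehat{\vgamma} = \vzeros \iff \widehat{\vbeta} = \vzeros$ then deliver both the formula and the degenerate case in one stroke. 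Your reduction is more modular: it inherits the $\widehat{\vbeta} = \vzeros$ case from Proposition \ref{prp:quad} rather than redoing it, it transports uniqueness explicitly through the bijection (a point the paper glosses over), and it avoids the paper's slightly circular-looking step where the sphere radius $\sqrt{\widehat{\vbeta}'\widehat{\vbeta}}$ is used before the link between the constrained and unconstrained minimum-distance problems is fully justified. What the paper's direct route buys in exchange is a self-contained geometric picture (projection onto a sphere section) that does not depend on having proved Proposition \ref{prp:quad} first. I see no gap in your argument.
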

	\begin{proof}
		I will consider two cases: $\widehat{\vbeta} \neq \vzeros$ and $\widehat{\vbeta} = \vzeros$. \\\\
		
		\noindent \textbf{Case} $\widehat{\vbeta} \neq \vzeros$ \\
		Using the substitution $\vbeta = \mS^{\frac{1}{2}}\vlambda$ and $\widehat{\vm} = \mS^{-\frac{1}{2}}\vm$ yields
		\begin{align*}
		\widehat{\vlambda} 
		= \argmax_{\substack{\vlambda \in \calD_{\mLambda}\\ \vlambda'\mS\vlambda = 1}} \widehat{\vmu}'\vlambda 
		= \mS^{-\frac{1}{2}}\argmax_{\substack{\vbeta \in \calD_{\mLambda} \\ \vbeta'\vbeta = 1}} \widehat{\vm}'\vbeta,
		\end{align*}
		where $\vbeta \in \calD_{\mLambda}$ if $\vlambda \in \calD_{\mLambda}$, as $\mS$ is diagonal and positive-definite. Define $\widehat{\vbeta} = \argmin_{\substack{\vbeta \in \calD_{\mLambda}}} \|\vbeta - \widehat{\vm}\|_2^2$. From Lemma 1 it follows that
		\begin{align*}
		\widehat{\vlambda}
		= \tfrac{1}{\sqrt{\widehat{\vbeta}'\widehat{\vbeta}}}\mS^{-\frac{1}{2}}\argmax_{\substack{\vbeta \in \calD_{\mLambda} \\ \vbeta'\vbeta = \widehat{\vbeta}'\widehat{\vbeta}}} \widehat{\vm}'\vbeta 
		%= \tfrac{1}{\sqrt{\widehat{\vtheta}'\widehat{\vtheta}}}\mS^{-\frac{1}{2}}\argmin_{\substack{\vtheta \in \mathbb{R}^p \\ \vtheta'\vtheta %= \widehat{\vtheta}'\widehat{\vtheta} \\ \|\vtheta\|_0 \leq k}} \vtheta'\vtheta-2\widehat{\vbeta}'\vtheta 
		= \tfrac{1}{\sqrt{\widehat{\vbeta}'\widehat{\vbeta}}}\mS^{-\frac{1}{2}}\argmin_{\substack{\vbeta \in \calD_{\mLambda}}} \|\vbeta - \widehat{\vm}\|_2^2
		= \tfrac{1}{\sqrt{\widehat{\vbeta}'\widehat{\vbeta}}}\mS^{-\frac{1}{2}}\widehat{\vbeta}.
		\end{align*}
		\noindent \textbf{Case} $\widehat{\vbeta} = \vzeros$. \\
		This case is analogous to the case that $\widehat{\vbeta} = \vzeros$ in the proof of Proposition \ref{prp:quad}.
	\end{proof}

	\begin{prp}
		Let $\vlambda'\widehat{\mSigma}\vlambda > 0$ for all $\vlambda \in \calC^\emptyset$. Let $\widehat{\vbeta} = \argmin_{\substack{\vbeta \in \calC}} \frac{1}{n}\|\viota_n - \mX\vbeta\|_2^2$ and $\widehat{\vlambda} = \argmax_{\substack{\vlambda \in (\calS\cap\calC)\cup\{\vzeros\}}} \vmu'\vlambda$ be unique optimizers. Then
		\begin{align*}
			\widehat{\vlambda}
				= \widehat{\vbeta}/\sqrt{\widehat{\vbeta}'\widehat{\mSigma}\widehat{\vbeta}},
		\end{align*}
		if $\widehat{\vbeta} \neq \vzeros$ and $\widehat{\vlambda} = \vzeros$, otherwise.
	\end{prp}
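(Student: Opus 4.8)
The plan is to reduce the claim to Proposition \ref{prp:quad}, applied with $\vm = \widehat{\vmu}$ and $\mS = \widehat{\mSigma}$. The only thing that needs checking is that the ordinary least squares criterion $\tfrac{1}{n}\|\viota_n - \mX\vbeta\|_2^2$ and the quadratic criterion $1 - 2\widehat{\vmu}'\vbeta + \vbeta'\widehat{\mSigma}\vbeta$ of Proposition \ref{prp:quad} are minimized over $\calC$ at vectors lying on a common ray through the origin, so that after $\widehat{\mSigma}$-normalization they produce the same $\widehat{\vlambda}$.

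First I would expand the regression objective algebraically. Since $\widehat{\vmu} = \tfrac{1}{n}\mX'\viota_n$ and $\mP_{\viota_n} = \tfrac{1}{n}\viota_n\viota_n'$, one has $\tfrac{1}{n}\mX'\mX = \widehat{\mSigma} + \widehat{\vmu}\widehat{\vmu}'$, and therefore
\begin{align*}
	\tfrac{1}{n}\|\viota_n - \mX\vbeta\|_2^2
	= 1 - 2\widehat{\vmu}'\vbeta + \vbeta'\widehat{\mSigma}\vbeta + (\widehat{\vmu}'\vbeta)^2
	= \big(1 - \widehat{\vmu}'\vbeta\big)^2 + \vbeta'\widehat{\mSigma}\vbeta .
\end{align*}
Thus the regression problem differs from the one in Proposition \ref{prp:quad} only through the extra nonnegative term $(\widehat{\vmu}'\vbeta)^2$.

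Next I would carry out a one-dimensional minimization along rays, in the spirit of Lemma \ref{lem:1} and the proof of Proposition \ref{prp:quad}. By the restricted eigenvalue hypothesis, $\vb'\widehat{\mSigma}\vb > 0$ for every $\vb \in \calC^\emptyset$, so every nonzero $\vbeta \in \calC$ can be written $\vbeta = t\vb$ with $t > 0$ and $\vb \in \calS^0 \cap \calC$, i.e. $\vb'\widehat{\mSigma}\vb = 1$. Writing $a = \widehat{\vmu}'\vb$, the two objectives restricted to this ray become $1 - 2ta + t^2(a^2+1)$ and $1 - 2ta + t^2$; both have minimum $1$ at $t = 0$ when $a \le 0$, and otherwise a unique positive minimizer ($t = a/(a^2+1)$, resp.\ $t = a$) with minimal values $1/(a^2+1)$, resp.\ $1 - a^2$ --- each a strictly decreasing function of $a$ on $a > 0$. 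Hence both problems are minimized along the same direction $\vb^\star = \argmax_{\vb \in \calS^0 \cap \calC}\widehat{\vmu}'\vb = \widehat{\vlambda}$ (with $\vb^\star = \vzeros$, hence $\widehat{\vlambda} = \vzeros$, when no direction has $a > 0$), and the two minimizers differ only by a positive scalar. Uniqueness of the regression minimizer $\widehat{\vbeta}$ then forces uniqueness of the minimizer in Proposition \ref{prp:quad}, and $\widehat{\vbeta} = \vzeros$ if and only if that minimizer is $\vzeros$. Invoking Proposition \ref{prp:quad} and using that $\widehat{\vbeta}$ is a positive multiple of its minimizer (so the $\widehat{\mSigma}$-normalization is unaffected) gives the stated formula when $\widehat{\vbeta} \neq \vzeros$, and yields $\widehat{\vlambda} = \vzeros$ in the degenerate case.

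I expect the main obstacle to be the bookkeeping in this last step: making precise that ``same minimizing direction, different scaling'' implies the two minimizers are positive multiples of one another, and that uniqueness together with the $\vzeros$-versus-nonzero dichotomy transfers between the two problems so that Proposition \ref{prp:quad} may legitimately be applied. The algebraic identity for the objective and the ray-wise reduction are routine.
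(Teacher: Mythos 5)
Your proposal is correct, and it reaches the result by a somewhat different route than the paper. Both arguments hinge on the same rank-one identity $\tfrac{1}{n}\mX'\mX = \widehat{\mSigma} + \widehat{\vmu}\widehat{\vmu}'$, i.e.\ $\tfrac{1}{n}\|\viota_n-\mX\vbeta\|_2^2 = 1 - 2\widehat{\vmu}'\vbeta + \vbeta'\widehat{\mSigma}\vbeta + (\widehat{\vmu}'\vbeta)^2$, but they exploit it differently. The paper keeps the Gramian $\widehat{\mG}=\mX'\mX/n$ as an intermediate normalization: it applies Proposition \ref{prp:quad} with $\mS=\widehat{\mG}$ to express $\widehat{\vbeta}$ as a multiple of $\argmax_{\vlambda\in\calC,\,\vlambda'\widehat{\mG}\vlambda=1}\widehat{\vmu}'\vlambda$, uses Lemma \ref{lem:1} to express $\widehat{\vlambda}$ as another multiple of the same vector, and then matches the two scalars. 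You instead parametrize $\calC^\emptyset$ by rays $t\vb$ with $\vb'\widehat{\mSigma}\vb=1$ and minimize each objective in $t$ in closed form, observing that both ray-minima ($1/(a^2+1)$ and $1-a^2$ with $a=\widehat{\vmu}'\vb$) are strictly decreasing in $a$ on $a>0$, so both problems select the direction $\widehat{\vlambda}$ and differ only by a positive scalar that the $\widehat{\mSigma}$-normalization removes. Your route is self-contained and makes the ``same direction, different scale'' mechanism explicit --- indeed it reproves in this setting what Lemma \ref{lem:1} and Proposition \ref{prp:quad} are doing, so your final appeal to Proposition \ref{prp:quad} is redundant rather than necessary; the paper's route is shorter given that machinery but leaves the common direction implicit. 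One small point worth tightening: the case $a^\star=\max_{\vb}\widehat{\vmu}'\vb = 0$ would make $\widehat{\vlambda}$ non-unique (both $\vzeros$ and the maximizing direction attain $0$), but this is excluded by the uniqueness hypothesis of the proposition, so your $\vzeros$-dichotomy goes through.
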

	\begin{proof}
		Let $\widehat{\mG} = \mX'\mX/n$ denote the Gramian matrix. I will consider two cases. \\
		
		\noindent \textbf{Case } $\widehat{\vbeta} \neq \vzeros$ \\
		Writing out the $\ell_2$-norm and using Proposition \ref{prp:quad} yields
		\begin{align*}
			\widehat{\vbeta} 
				= \argmin_{\substack{\vbeta \in \calC}} 1 - 2\widehat{\vmu}'\vbeta + \widehat{\vbeta}'\widehat{\mG}\widehat{\vbeta}
				= \sqrt{\widehat{\vbeta}'\widehat{\mG}\widehat{\vbeta}} \argmax_{\substack{\vlambda \in \calC \\ \vlambda'\widehat{\mG}\vlambda = 1}} \widehat{\vmu}'\vlambda,
		\end{align*}
		where I use the fact that $\vlambda'\widehat{\mSigma}\vlambda := \vlambda'\widehat{\mG}\vlambda - (\widehat{\vmu}'\vlambda)^2> 0$ for all $\vlambda \in \calC^{\emptyset}$, implies that $\vlambda'\widehat{\mG}\vlambda > 0$ for all $\vlambda \in \calC^{\emptyset}$. By Lemma \ref{lem:1}
		\begin{align*}
			\widehat{\vlambda} 
			= \argmax_{\substack{\vlambda \in \calC \\ \vlambda'\widehat{\mG}\vlambda - (\widehat{\vmu}'\vlambda)^2= 1}} \widehat{\vmu}'\vlambda
			= \argmax_{\substack{\vlambda \in \calC \\ \vlambda'\widehat{\mG}\vlambda = 1 + (\widehat{\vmu}'\widehat{\vlambda})^2}} \widehat{\vmu}'\vlambda
			= \sqrt{1 + (\widehat{\vmu}'\widehat{\vlambda})^2}\argmax_{\substack{\vlambda \in \calC \\ \vlambda'\widehat{\mG}\vlambda = 1}} \widehat{\vmu}'\vlambda.
		\end{align*}
		Combining this and applying some straightforward algebra yields
		\begin{align*}
			\widehat{\vlambda} 
				=  \sqrt{\tfrac{1 + (\widehat{\vmu}'\widehat{\vlambda})^2}{\widehat{\vbeta}'\widehat{\mG}\widehat{\vbeta}}} \widehat{\vbeta}
				=  \tfrac{1}{\sqrt{\widehat{\vbeta}'\widehat{\mSigma}\widehat{\vbeta}}} \widehat{\vbeta}.
		\end{align*}
		\noindent \textbf{Case} $\widehat{\vbeta} = \vzeros$ \\
		This case is analogous to the case that $\widehat{\vbeta} = \vzeros$ in the proof of Proposition \ref{prp:quad}.
	\end{proof}

\end{document}